\newtheorem{theorem}{Theorem}[section]
\newtheorem{corollary}[theorem]{Corollary}
\newtheorem{proposition}[theorem]{Proposition}
\newtheorem{lemma}[theorem]{Lemma}
\newtheorem*{theorem*}{Theorem}
\newtheorem*{corollary*}{Corollary}
\newtheorem*{lemma*}{Lemma}
\newtheorem*{proposition*}{Proposition}
\theoremstyle{definition}
\newtheorem{definition}[theorem]{Definition}
\newtheorem{remark}[theorem]{Remark}
\newtheorem{example}[theorem]{Example}
\newtheorem{thmx}{Theorem}[section]
\DeclareMathOperator{\length}{\ell}
\DeclareMathOperator{\pdim}{pd}
\DeclareMathOperator{\tor}{Tor}
\DeclareMathOperator{\ext}{Ext}
\DeclareMathOperator{\ann}{ann}
\DeclareMathOperator{\depth}{depth}
\DeclareMathOperator{\Hom}{Hom}
\DeclareMathOperator{\supp}{Supp}
\DeclareMathOperator{\height}{ht}
\DeclareMathOperator{\spec}{Spec}
\DeclareMathOperator{\grade}{grade}
\DeclareMathOperator{\cx}{cx}
\DeclareMathOperator{\px}{px}
\title{On the Ext Analog of the Euler Characteristic}
\author{Benjamin Katz\footnote{bkatz2@huskers.unl.edu}\text{ } and Andrew J. Soto Levins\footnote{ansotole@ttu.edu}}
\date{}
\begin{document}
\maketitle
\begin{abstract}
The Euler form is an Ext analog of the Euler characteristic, and in this paper we study the Euler form and give some applications. The first being a question of Jorgensen, which bounds the projective dimension of a module over a complete intersection by using the vanishing of self extensions. Our second application uses the Euler form to yield a new result involving the vanishing of the higher Herbrand difference. Along the way we translate some of our results to the graded setting.
\end{abstract}

\section{Introduction}
Serre's work on intersection multiplicities is a fundamental topic in commutative algebra. Let $(R,m,k)$ be a Noetherian local ring and let $M$ and $N$ be finitely generated modules. We write $\length(-)$ for the length of a module. For a fixed integer $j$, if $\length(\tor_{i}^{R}(M,N))<\infty$ for $i\geq j$ and if $\tor_{i}^{R}(M,N)=0$ for $i\gg 0$, we define the \textit{partial Euler Characteristic} $\chi_{j}^{R}(M,N)$ to be
\[\chi_{j}^{R}(M,N)=\sum_{i\geq j}(-1)^{i-j}\length(\tor_{i}^{R}(M,N)).\]
We write $\chi^{R}(M,N)$ for $\chi_{0}^{R}(M,N)$. Recall that $R$ is \textit{unramified} if $R$ is equicharacteristic or is mixed characteristic and char($k$)$\not\in m^{2}$. Work of Hochster \cite{hochster2} and of Lichtenbaum \cite{lichtenbaum} proves the following.
\begin{theorem}[Hochster and Lichtenbaum] \label{hochsterlichtenbaumtheorem} Let $R$ be an unramified regular local ring and let $M$ and $N$ be finitely generated modules. Let $j\geq 1$ be an integer and assume that $\length(\tor_{i}^{R}(M,N))<\infty$ for $i\geq j$. Then $\chi_{j}^{R}(M,N)=0$ if and only if $\tor_{i}^{R}(M,N)=0$ for all $i\geq j$.
\end{theorem}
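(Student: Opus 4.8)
The implication ``$\tor_{i}^{R}(M,N)=0$ for all $i\geq j$ $\Rightarrow$ $\chi_{j}^{R}(M,N)=0$'' is immediate, so the content is the converse. The plan is to reduce to $j=1$ and then, via a dévissage, to the case in which $M\otimes_{R}N$ has finite length, where Serre's intersection multiplicity theory applies. For the reduction to $j=1$, let $L$ be a $(j-1)$st syzygy of $M$ in a free resolution; then $\tor_{i}^{R}(L,N)\cong\tor_{i+j-1}^{R}(M,N)$ for $i\geq 1$, so $\length(\tor_{i}^{R}(L,N))<\infty$ for $i\geq 1$, one has $\chi_{1}^{R}(L,N)=\chi_{j}^{R}(M,N)$, and $\tor_{i}^{R}(L,N)=0$ for all $i\geq 1$ precisely when $\tor_{i}^{R}(M,N)=0$ for all $i\geq j$. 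So it suffices to prove, by Noetherian induction on $\dim N$, the sharper statement: \emph{if $\length(\tor_{i}^{R}(M,N))<\infty$ for all $i\geq 1$, then $\chi_{1}^{R}(M,N)\geq 0$, with equality if and only if $\tor_{i}^{R}(M,N)=0$ for all $i\geq 1$.}

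The inductive engine is a cutting-down. Assume first $\depth N>0$ and $\depth(M\otimes_{R}N)>0$; by prime avoidance I would pick $x\in m$ that is a nonzerodivisor on $N$ and on $M\otimes_{R}N$ and a parameter for $N$, so $\dim(N/xN)<\dim N$. The long exact sequence of $\tor_{i}^{R}(M,-)$ for $0\to N\xrightarrow{x}N\to N/xN\to 0$ gives, for each $i\geq 1$, a short exact sequence
\[0\longrightarrow\tor_{i}^{R}(M,N)/x\tor_{i}^{R}(M,N)\longrightarrow\tor_{i}^{R}(M,N/xN)\longrightarrow\bigl(0:_{\tor_{i-1}^{R}(M,N)}x\bigr)\longrightarrow 0,\]
whose right-hand term vanishes for $i=1$ since $x$ is a nonzerodivisor on $\tor_{0}^{R}(M,N)=M\otimes_{R}N$. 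Thus every $\tor_{i}^{R}(M,N/xN)$ has finite length for $i\geq 1$, and, using $\length(T/xT)=\length(0:_{T}x)$ for finite-length $T$, the alternating sum telescopes to $\chi_{1}^{R}(M,N/xN)=0$. The inductive hypothesis then forces $\tor_{i}^{R}(M,N/xN)=0$ for all $i\geq 1$; reading the sequences again gives $\tor_{i}^{R}(M,N)=x\,\tor_{i}^{R}(M,N)$, so Nakayama yields $\tor_{i}^{R}(M,N)=0$ for all $i\geq 1$, and hence $\chi_{1}^{R}(M,N)=0$. The remaining cases — $\depth N=0$, and $\depth(M\otimes_{R}N)=0$ with $M\otimes_{R}N$ of positive dimension — I would handle by splitting off the submodules $H^{0}_{m}(-)$ and tracking the correction terms from the connecting maps, so as to reduce to the case where $M\otimes_{R}N$ has finite length.

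That finite-length case is the classical heart: there $\chi_{1}^{R}(M,N)=\length(M\otimes_{R}N)-\chi^{R}(M,N)$, and the content is the inequality $\chi^{R}(M,N)\leq\length(M\otimes_{R}N)$ together with the assertion that equality holds exactly when the higher Tor vanish. This is where I would invoke Serre's vanishing and positivity theorems for unramified regular local rings — proved by Serre via reduction to the diagonal in the equicharacteristic case, and by Hochster, building on Lichtenbaum, in the mixed-characteristic unramified case — along with the non-negativity of the higher partial Euler characteristics and the rigidity of $\tor^{R}$ over a regular local ring (Auslander), the latter being what promotes the numerical identity $\chi_{1}^{R}(M,N)=0$ back to the vanishing of all $\tor_{i}^{R}(M,N)$. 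Concretely, for $N=k$ one has $\chi_{1}^{R}(M,k)=\mu(M)-\chi^{R}(M,k)$ with $\mu(M)$ the minimal number of generators of $M$, and $\chi^{R}(M,k)$ equals $0$ if $\dim M<\dim R$ and $\rank M$ if $\dim M=\dim R$ (use Serre's vanishing with additivity of $\chi^{R}$ along an exact sequence $0\to R^{\rank M}\to M\to T\to 0$ with $T$ torsion); since $\mu(M)\geq\rank M$ always, with equality forcing an isomorphism $R^{\rank M}\xrightarrow{\sim}M$, it follows that $\chi_{1}^{R}(M,k)\geq 0$ with equality exactly when $M$ is free, which matches $\tor_{i}^{R}(M,k)=0$ for $i\geq 1$.

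The main obstacle is precisely this input. The reduction to $j=1$ and the cutting-down are elementary, but the finite-length case is Serre's intersection multiplicity theory, and its proof in mixed characteristic is substantially harder than in the geometric case. The unramifiedness hypothesis is essential there: it is what makes the completion $\widehat{R}$ a formal power series ring over a field or over a complete unramified discrete valuation ring, which is the setting in which both reduction to the diagonal and Auslander's rigidity theorem operate; outside the unramified case the corresponding statements are considerably more delicate.
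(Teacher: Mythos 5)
The paper does not prove this theorem; it is stated and attributed to Hochster \cite{hochster2} and Lichtenbaum \cite{lichtenbaum}, and no proof is given in the text. So there is nothing internal to compare against, and your outline has to be judged on its own.

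Your reduction to $j=1$ via syzygies is correct, and the cutting-down step with a nonzerodivisor $x$ on $N$ and on $M\otimes_R N$, together with the telescoping of lengths, is the standard maneuver. However, two places leave genuine gaps. First, the case $\depth N=0$ or $\depth(M\otimes_R N)=0$ is dispatched with ``split off $H^{0}_{m}(-)$ and track the correction terms''; but if you write out the long exact Tor sequence for $0\to H^{0}_{m}(N)\to N\to \overline{N}\to 0$, the truncated alternating sum gives $\chi_{1}^{R}(M,N)=\chi_{1}^{R}(M,H^{0}_{m}(N))+\chi_{1}^{R}(M,\overline{N})-\length(K)$ for a nonzero correction module $K=\ker(M\otimes H^{0}_{m}(N)\to M\otimes N)$, and nothing you have written shows how the hypothesis $\chi_{1}^{R}(M,N)=0$ propagates through this. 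Second, in the finite-length base case your list of ingredients (Serre's vanishing/positivity, ``non-negativity of higher partial Euler characteristics,'' Auslander rigidity) does not yet assemble into a proof: rigidity lets you go from ``some single $\tor_n(M,N)=0$'' to ``all higher ones vanish,'' but to deduce from $\chi_{1}^{R}(M,N)=0$ that \emph{some} $\tor_n(M,N)$ vanishes you need the \emph{strict} positivity $\chi_{j}^{R}(M,N)>0$ whenever $j\leq q$ (with $q$ the top nonvanishing index) — and that strict positivity is exactly the Lichtenbaum--Hochster theorem you are trying to prove, so the base case as written is circular. You should also sanity-check the positive-depth branch: as you have set it up, you conclude $\tor_{i}^{R}(M,N)=0$ for all $i\geq 1$ \emph{without ever using} $\chi_{1}^{R}(M,N)=0$; if that conclusion is really unconditional in that branch it needs its own justification (e.g.\ via Auslander's depth formula for the last nonvanishing Tor), and if it is not, then a sign has gone wrong somewhere in the telescoping.
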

For a fixed integer $j$, if $\length(\ext_{R}^{i}(M,N))<\infty$ for $i\geq j$ and if $\ext_{R}^{i}(M,N)=0$ for $i\gg 0$, we define the \textit{partial Euler Form} $\xi_{j}^{R}(M,N)$ to be
\[\xi_{j}^{R}(M,N)=\sum_{i\geq j}(-1)^{i-j}\length(\ext_{R}^{i}(M,N)).\]
We write $\xi^{R}(M,N)$ for $\xi_{0}^{R}(M,N)$. The integer $\xi^{R}(M,N)$ is called the \textit{Euler Form} and it was first studied in \cite{mori2}, also see \cite{jeanchan, mori, mori3}.\newline

The work in this paper began with the following question: are the partial Euler forms related to the vanishing of Ext in the same way the partial Euler characteristics are related to the vanishing of Tor? The first theorem of this paper is the following. The \textit{grade of $M$ against $N$ over $R$}, written $\grade_{R}(M,N)$, is the index of the first nonzero Ext module of $M$ against $N$ (see Section \ref{maintheoremsection} for a more detailed definition).
\begin{thmx}[Lemma \ref{signsofpartialeulerforms} and Theorem \ref{mainresult}] \label{maintheorem} Let $R$ be an unramified regular local ring, let $M$ and $N$ be finitely generated modules with $\length(M\otimes_{R}N)<\infty$ and $\dim{M}+\dim{N}<\dim{R}$, and assume $j$ is an integer with 
\[1\leq j\leq \grade_{R}M.\]
Then $\xi_{j}^{R}(M,N)\geq 0$ and the following are equivalent.
\begin{enumerate}
\item $\xi_{j}^{R}(M,N)=0$.
\item $\grade_{R}(M,N)\geq j$.
\item $\chi_{\pdim_{R}M-j+1}^{R}(M,N)=0$.
\end{enumerate}
\end{thmx}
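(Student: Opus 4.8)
The plan is to route all three conditions, and the inequality $\xi_j^R(M,N)\ge 0$, through condition (2), which under these hypotheses just says $\depth_R N\ge j$, and to use two vanishing phenomena: Serre's Vanishing Theorem, giving $\chi^R(M,N)=0$ since $R$ is unramified regular and $\dim M+\dim N<\dim R$, and the vanishing $\xi^R(M,N)=0$ of the Euler form. We may assume $M,N\neq 0$, the cases $M=0$ or $N=0$ being trivial. Put $d:=\pdim_R M<\infty$; since $1\le j\le\grade_R M\le d$ we have $d-j+1\ge 1$. The hypothesis $\length(M\otimes_R N)<\infty$ forces $\supp M\cap\supp N\subseteq\{m\}$, so each $\tor_i^R(M,N)$ and $\ext_R^i(M,N)$ has finite length and they vanish for $i\gg 0$; moreover $\grade_R(M,N)=\depth_R N$ (because $N_{\mathfrak p}=0$ for every $\mathfrak p\in\supp M\setminus\{m\}$), and $\ext_R^d(M,N)\neq 0$ (it surjects onto $\Hom_R(F_d,N)\otimes_R k\neq 0$, $F_\bullet$ a minimal free resolution of $M$), so $\depth_R N\le d$. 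For $\xi^R(M,N)=0$ I would use the spectral sequence $\tor_q^R(\ext_R^p(M,R),N)\Rightarrow\ext_R^{p-q}(M,N)$ coming from $\mathbf{R}\Hom_R(M,N)\simeq\mathbf{R}\Hom_R(M,R)\otimes^{\mathbf{L}}_R N$; additivity of Euler characteristics gives $\xi^R(M,N)=\sum_p(-1)^p\chi^R(\ext_R^p(M,R),N)$, and each term is $0$ by Serre's Vanishing since $\dim\ext_R^p(M,R)+\dim N\le\dim M+\dim N<\dim R$.

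Two of the implications are routine. For $(2)\Rightarrow(1)$: if $\grade_R(M,N)\ge j$ then $\ext_R^i(M,N)=0$ for $i<j$, so $\xi_j^R(M,N)=(-1)^j\sum_{i\ge 0}(-1)^i\length(\ext_R^i(M,N))=(-1)^j\xi^R(M,N)=0$. For $(2)\Leftrightarrow(3)$: by Theorem \ref{hochsterlichtenbaumtheorem} at the index $d-j+1\ge 1$, condition (3) is equivalent to $\tor_i^R(M,N)=0$ for all $i\ge d-j+1$, i.e.\ to $\sup\{i:\tor_i^R(M,N)\neq 0\}\le d-j$; by the depth formula $\sup\{i:\tor_i^R(M,N)\neq 0\}=\pdim_R M-\depth_R N$ (valid as the $\tor$ modules have finite length) this says $\depth_R N\ge j$, i.e.\ $\grade_R(M,N)\ge j$.

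What remains is $(1)\Rightarrow(2)$ together with $\xi_j^R(M,N)\ge 0$, and here I would use the duality between $\ext$ of $M$ and $\tor$ of a dual module. From $\mathbf{R}\Hom_R(M,N)\simeq\mathbf{R}\Hom_R(M,R)\otimes^{\mathbf{L}}_R N$, writing $C:=\mathbf{R}\Hom_R(M,R)$ reversed and shifted so that $H_k(C\otimes^{\mathbf{L}}_R N)=\ext_R^{d-k}(M,N)$, and combining $\xi^R(M,N)=0$ with the elementary alternating-sum identities for partial Euler characteristics, one obtains
\[
\xi_j^R(M,N)=\sum_{k\ge d-j+1}(-1)^{k-(d-j+1)}\length\bigl(H_k(C\otimes^{\mathbf{L}}_R N)\bigr),
\]
the partial Euler characteristic of $C$ against $N$ starting in homological degree $d-j+1\ge 1$. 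When $M$ is \emph{perfect} (i.e.\ $\grade_R M=d$) this is concrete: $C\simeq M^\dagger:=\ext_R^d(M,R)$, which is again perfect of projective dimension $d$ with $\ext_R^i(M,N)\cong\tor_{d-i}^R(M^\dagger,N)$, so the displayed formula reads $\xi_j^R(M,N)=\chi_{d-j+1}^R(M^\dagger,N)$; non-negativity then follows from that of partial Euler characteristics, and Theorem \ref{hochsterlichtenbaumtheorem} at the index $d-j+1$ gives
\begin{align*}
\xi_j^R(M,N)=0 &\iff \tor_i^R(M^\dagger,N)=0\ \ (i\ge d-j+1)\\
&\iff \ext_R^i(M,N)=0\ \ (i\le j-1)\iff\grade_R(M,N)\ge j.
\end{align*}
For general $M$ the complex $C$ carries the several cohomology modules $\ext_R^p(M,R)$ for $\grade_R M\le p\le d$; I would finish either by quoting the non-negativity and Hochster--Lichtenbaum statements for bounded complexes of finitely generated free modules, or by a d\'evissage stripping the cohomology of $C$ off one step at a time. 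In either case the objects appearing remain supported on $\supp M$, so the dimension inequality (hence Serre's Vanishing and the vanishing of the Euler form) and the finite-length hypotheses persist. I expect this passage from the perfect case to arbitrary $M$ to be the main obstacle: one needs the Hochster--Lichtenbaum and non-negativity conclusions for complexes rather than modules, or a d\'evissage carried out carefully enough that every intermediate object still meets the finiteness and dimension hypotheses under which the depth formula and the vanishing theorems were applied.
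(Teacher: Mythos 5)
Your proposal handles the easy implications correctly: $(2)\Rightarrow(1)$ is fine once $\xi^{R}(M,N)=0$ is in hand (and your spectral sequence argument for that is a valid alternative to the paper's route via Chan's formula and Serre's Vanishing), and your $(2)\Leftrightarrow(3)$ via Hochster--Lichtenbaum and the formula $\sup\{i:\tor_{i}^{R}(M,N)\neq 0\}=\pdim_{R}M-\depth N$ is essentially identical to the paper's Lemma~\ref{lemmaformainresult}. Your treatment of the perfect case is also correct and instructive: the identity $\xi_{j}^{R}(M,N)=\chi_{\pdim_{R}M-j+1}^{R}(M^{\dagger},N)$ with $M^{\dagger}=\ext_{R}^{d}(M,R)$ reduces both the non-negativity and $(1)\Leftrightarrow(2)$ to the Tor statement, and it gives a transparent reason why $(3)$ appears with the shift $\pdim_{R}M-j+1$.

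The genuine gap is the one you flagged yourself: for non-perfect $M$ (i.e.\ $\grade_{R}M<\pdim_{R}M$), you do not complete the proof of $\xi_{j}^{R}(M,N)\geq 0$ and $(1)\Rightarrow(2)$. The sketch of a d\'evissage on $C=\mathbf{R}\Hom_{R}(M,R)$, or an appeal to a complex-level Hochster--Lichtenbaum, is not carried out, and it is not routine: the cohomology modules $\ext_{R}^{p}(M,R)$ for $\grade_{R}M<p\leq\pdim_{R}M$ need not be Cohen--Macaulay or perfect, and controlling the signs in the resulting filtration of the partial Euler form is exactly where the restriction $j\leq\grade_{R}M$ has to be used. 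The paper avoids this entirely by a different decomposition: from Chan's formula $\chi^{R}(M,N)=(-1)^{\grade_{R}M}\xi^{R}(M,N)$ and Serre's Vanishing one gets $\xi_{j}^{R}(M,N)=\overline{\xi}_{j-1}^{R}(M,N)$ (Lemma~\ref{superjeanchanformula}), and then the needed facts --- that $\overline{\xi}_{j-1}^{R}(M,N)\geq 0$ for $2\leq j\leq\grade_{R}M$, with equality iff $\ext_{R}^{i}(M,N)=0$ for $i\leq j-1$ --- are taken from the cited result \cite[Lemma 2.6]{soto}, which packages precisely the Ext-rigidity phenomenon you were trying to reprove through duality. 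So your approach is not wrong, but it reproves a prerequisite from scratch and only succeeds in the perfect case; to complete it you would need either to prove the analogue of \cite[Lemma 2.6]{soto} yourself, or to carry out the d\'evissage with explicit sign and finiteness bookkeeping, neither of which is done here.
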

This theorem tells us that the partial Euler forms can detect the vanishing of Ext, but not in the same way that the partial Euler characteristics detect the vanishing of Tor. From the equivalence of (1) and (2) and the definition of $\grade_R(M,N)$ we have $\xi_{j}^{R}(M,N)=0$ if and only if $\ext_{R}^{n}(M,N)=0$ for $n<j$, whereas $\chi_{j}^{R}(M,N)=0$ is equivalent to $\tor_{n}^{R}(M,N)=0$ for $n\geq j$. In Example \ref{nawajexample} we show that the partial Euler forms can be negative, something that cannot happen with the partial Euler characteristics over an unramified regular local ring.\newline

The rest of the paper is devoted to using the Euler form to prove results related to the vanishing of Ext. In Theorem \ref{moreimprovedjorgensenquestiontheorem} we make progress on a question of Jorgensen \cite[Question 2.7]{jorgensen}, which asks must $\ext_{R}^{n}(M,M)$ be nonzero for $0\leq n\leq\pdim_{R}M$ for a nonzero finitely generated module $M$ of finite projective dimension over a local complete intersection. In Theorem \ref{gradedjeanchan} we prove a graded version of a formula of Chan \cite[Theorem 5]{jeanchan}. Chan showed that if $M$ and $N$ are finitely generated modules of finite projective dimension over a local complete intersection $R$ with $\length(M\otimes_{R}N)<\infty$, then
\[\chi^{R}(M,N)=(-1)^{\grade_{R}M}\xi^{R}(M,N).\]
We end the paper by giving a new case for when the higher Herbrand difference vanishes (see Theorem \ref{vanishingofextlemmausingherbranddifference}).\newline

We would like to thank Lars Christensen, Nawaj KC, and Mark Walker for the many helpful conversations throughout this project. We would also like to thank Justin Lyle and Alexandra Seceleanu for answering our Macaulay2 questions.

\section{The Partial Euler Forms} \label{maintheoremsection}
The goal of this section is to define and study the partial Euler forms. In the proposition below we summarize the facts about the grade of a pair of modules $\grade_{R}(M,N)$ that we will use. We denote $\grade_{R}(M,R)$ by $\grade_{R}{M}$. The following is \cite[Proposition 1.2.10 and Definition 1.2.11]{brunsherzog}.
\begin{proposition} \label{gradeproposition} Let $R$ be a Noetherian ring and let $M$ and $N$ be finitely generated modules. Then
\begin{align*}
\grade_{R}(M,N) = \inf\{n|\ext_{R}^{n}(M,N)\neq 0\} &= \text{ longest $N$ regular sequence in $\ann{M}$} \\
&= \inf\{\depth{N_{p}}|p\in\supp{M}\cap\supp{N}\}.
\end{align*}
\end{proposition}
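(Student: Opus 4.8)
My plan is as follows. Write $I:=\ann M$, so $\supp M=\supp(R/I)$, and recall that $\grade_R(M,N)$ is defined (Bruns--Herzog 1.2.11) to be the length of a maximal $N$-regular sequence in $I$ when $IN\neq N$, and $+\infty$ otherwise. If $IN=N$ then $\supp M\cap\supp N=\emptyset$, so $N_p=0$ for every $p\in\supp M$ and $\ext_R^i(M,N)=0$ for all $i$; all three expressions are then $+\infty$ and there is nothing to prove. So assume $IN\neq N$. A quick localization bound (localize an $N$-regular sequence in $I$ at a prime $p\in\supp M\cap\supp N$ to get an $N_p$-regular sequence, whose length is at most $\depth N_p<\infty$) shows that maximal $N$-regular sequences in $I$ are finite; set $g:=\grade_R(M,N)$ and induct on $g$, simultaneously proving that all such maximal sequences have the same length $g$.

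For the $\ext$ expression (Rees's theorem), the base case $g=0$ says $I$ has no $N$-regular element, so by prime avoidance ($N$ finitely generated) $I\subseteq p$ for some $p\in\ass_R N$; together with the identity $\ass_R\Hom_R(M,N)=\supp M\cap\ass_R N$ this gives $p\in\ass_R\Hom_R(M,N)$, so $\Hom_R(M,N)\neq 0$ and the infimum is $0$. Conversely $g\geq 1$ makes $\supp M\cap\ass_R N=\emptyset$, hence $\Hom_R(M,N)=0$ by the same identity, so the infimum is positive. For $g\geq 1$ choose an $N$-regular $x\in I$, so $\grade_R(M,N/xN)=g-1$. The endomorphism of $\ext_R^i(M,N)$ induced by $\cdot x\colon N\to N$ coincides with the one induced by $\cdot x\colon M\to M$, which is zero since $x\in\ann M$; hence the long exact sequence of $0\to N\xrightarrow{x}N\to N/xN\to 0$ breaks into short exact sequences
\[0\longrightarrow\ext_R^i(M,N)\longrightarrow\ext_R^i(M,N/xN)\longrightarrow\ext_R^{i+1}(M,N)\longrightarrow 0\qquad(i\geq 0).\]
Feeding in the inductive hypothesis ($\ext_R^i(M,N/xN)=0$ for $i<g-1$ and $\ext_R^{g-1}(M,N/xN)\neq 0$), a short descent through these sequences gives $\ext_R^i(M,N)=0$ for $i<g$ and $\ext_R^g(M,N)\cong\ext_R^{g-1}(M,N/xN)\neq 0$. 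Since $\inf\{n:\ext_R^n(M,N)\neq 0\}$ makes no reference to the chosen regular sequence, this also yields the common-length assertion.

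The depth expression is handled by the parallel induction on $g$. If $g=0$, the prime $p\in\ass_R N$ with $I\subseteq p$ found above lies in $\supp M\cap\supp N$ with $\depth N_p=0$, so the infimum of depths is $0$. If $g\geq 1$, fix an $N$-regular $x\in I$; every $p\in\supp M\cap\supp N$ contains $x$ (as $x\in\ann M$) and $x/1$ is $N_p$-regular (it avoids each associated prime of $N$ contained in $p$), so $\depth N_p=1+\depth(N/xN)_p$, while $\supp M\cap\supp N=\supp M\cap\supp(N/xN)$ because every prime of $\supp M$ contains $x$. Taking infima and using the inductive hypothesis $g-1=\inf\{\depth(N/xN)_p:p\in\supp M\cap\supp(N/xN)\}$ gives $\inf\{\depth N_p:p\in\supp M\cap\supp N\}=g$; primes of $\supp M$ off $\supp N$ have $N_p=0$ and are irrelevant. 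This closes the chain of equalities.

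I expect the only real friction to be bookkeeping: checking that the $\ext$ long exact sequence genuinely collapses into the displayed short exact sequences (the step where $x\in\ann M$ is essential), chasing the vanishing carefully through them, and establishing the identity $\ass_R\Hom_R(M,N)=\supp M\cap\ass_R N$ that powers both base cases --- the latter by localizing at a prime and observing that a nonzero map $M_p\to\kappa(p)\hookrightarrow N_p$ has annihilator exactly $pR_p$. Everything else is routine, and the degenerate case $IN=N$ is cleared away at the start.
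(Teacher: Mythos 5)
The paper does not prove this proposition --- it simply records it as \cite[Proposition 1.2.10 and Definition 1.2.11]{brunsherzog}. Your argument is correct and is essentially that textbook proof: you clear the degenerate case $\ann(M)N=N$ first, prove Rees's theorem by the usual induction using the long exact Ext sequence of $0\to N\xrightarrow{\,x\,}N\to N/xN\to 0$ (with $x\in\ann M$ killing the multiplication-by-$x$ endomorphism so the sequence decomposes into short exact pieces), and obtain the depth characterization by a parallel induction via $\depth N_p = 1+\depth(N/xN)_p$; both base cases rest on the identity $\ass\Hom_R(M,N)=\supp M\cap\ass_R N$. One small imprecision in the framing: you ``set $g:=\grade_R(M,N)$ and induct on $g$,'' but well-definedness of $\grade_R(M,N)$ --- that every maximal $N$-regular sequence in $\ann M$ has the same length --- is itself part of Rees's theorem. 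The argument is sound if read as an induction on the length of one \emph{chosen} maximal sequence, with independence of the choice (hence well-definedness of $\grade$) falling out at the end because $\inf\{n:\ext_R^n(M,N)\neq 0\}$ makes no reference to the sequence, exactly as you note.
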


The following summarizes what is known about the Euler form.
\begin{proposition} \label{xizerosummary} Let $R$ be a local complete intersection and let $M$ and $N$ be finitely generated modules with $\pdim_{R}M<\infty$, $\pdim_{R}N<\infty$, and $\length(M\otimes_{R}N)<\infty$. Then the following hold.
\begin{enumerate}
\item If $\dim{M}+\dim{N}<\dim{R}$, then $\xi^{R}(M,N)=0$.
\item If $R$ is regular and unramified and $\dim{M}+\dim{N}=\dim{R}$, then 
\[(-1)^{\grade_R{M}}\xi^{R}(M,N)>0.\]
\end{enumerate}
\end{proposition}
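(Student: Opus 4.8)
The plan is to deduce both parts from Chan's formula together with the classical vanishing and positivity results for the partial Euler \emph{characteristic}. Recall that Chan's theorem \cite{jeanchan} gives, for finitely generated modules $M$ and $N$ of finite projective dimension over a local complete intersection $R$ with $\length(M\otimes_{R}N)<\infty$, the identity
\[
\chi^{R}(M,N)=(-1)^{\grade_{R}M}\,\xi^{R}(M,N).
\]
Every hypothesis needed here appears in both parts of the proposition: in part (2) the regular local ring $R$ is a complete intersection of codimension zero, and all finitely generated modules then automatically have finite projective dimension. So in each part it suffices to prove the corresponding statement for $\chi^{R}(M,N)$ and transport it across this identity.

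For part (1), Chan's formula turns the assertion $\xi^{R}(M,N)=0$ into $\chi^{R}(M,N)=0$. Under the standing hypotheses $\pdim_{R}M<\infty$ and $\length(M\otimes_{R}N)<\infty$, the New Intersection Theorem already forces $\dim M+\dim N\le\dim R$, and the strict inequality $\dim M+\dim N<\dim R$ is exactly the hypothesis of the vanishing part of Serre's multiplicity conjectures, which is known for complete intersections (Roberts, and Serre in the regular case). Hence $\chi^{R}(M,N)=0$, and so $\xi^{R}(M,N)=0$. As a sanity check, the special case $\dim N=0$ can be seen by hand: for a finite free resolution $F_{\bullet}\to M$ the complex $\Hom_{R}(F_{\bullet},N)$ has finite-length terms, so $\xi^{R}(M,N)=\length(N)\sum_{i}(-1)^{i}\rank F_{i}$, and this alternating rank sum vanishes because $R$, being a complete intersection, is Cohen--Macaulay, hence equidimensional, so $\dim M<\dim R$ forces $M$ to vanish at every minimal prime of $R$.

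For part (2), Chan's formula gives $(-1)^{\grade_{R}M}\xi^{R}(M,N)=\chi^{R}(M,N)$. Since $R$ is a regular local ring with $\length(M\otimes_{R}N)<\infty$ and $\dim M+\dim N=\dim R$, this is the top-dimensional case, where Serre's positivity theorem for unramified regular local rings gives $\chi^{R}(M,N)>0$; hence $(-1)^{\grade_{R}M}\xi^{R}(M,N)>0$. The substance of the proof is thus entirely carried by the cited theorems, and the main point requiring care is invoking them in the right generality: part (2) is genuinely Serre positivity, so the unramified hypothesis is essential, whereas part (1) needs the vanishing theorem in the generality of complete intersections and not merely of regular local rings---this is the place where a reader will want the precise reference, since the vanishing conjecture for an arbitrary local ring remains open even though the complete intersection case is settled.
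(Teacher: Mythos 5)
Your proposal is correct and follows essentially the same route as the paper: reduce to the Tor side via Chan's formula $\chi^{R}(M,N)=(-1)^{\grade_{R}M}\xi^{R}(M,N)$, then invoke Serre vanishing (known for complete intersections) for part (1) and Serre positivity for unramified regular local rings for part (2). The extra "sanity check" for $\dim N=0$ is a nice aside but not part of the paper's argument.
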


\begin{proof}
In \cite[Theorem 5]{jeanchan} Chan proves
\begin{equation} \label{marksaidso}
\chi^{R}(M,N)=(-1)^{\grade_{R}{M}}\xi^{R}(M,N).
\end{equation}
If $\dim{M}+\dim{N}<\dim{R}$, then $\chi^{R}(M,N)=0$ by Serre's Vanishing Theorem (see \cite{giletsoule, roberts2}). Serre's Positivity Theorem (see \cite{serre}) shows that $\chi^{R}(M,N)>0$ if $R$ is regular and unramified and $\dim{M}+\dim{N}=\dim{R}$. This and (\ref{marksaidso}) proves 2.
\end{proof}

\begin{definition} Let $R$ be a ring and let $M$ and $N$ be finitely generated modules. Let $j$ be an integer and assume $\length(\ext_{R}^{i}(M,N))<\infty$ for $i\leq j$. Define $\overline{\xi}_{j}^{R}(M,N)$ to be
\[\overline{\xi}_{j}^{R}(M,N)=\sum_{i=0}^{j}(-1)^{i}\length(\ext_{R}^{j-i}(M,N)).\]
\end{definition}

\begin{lemma} \label{superjeanchanformula} Let $R$ be a local complete intersection and let $M$ and $N$ be finitely generated modules with $\pdim_{R}M<\infty$, $\pdim_{R}N<\infty$, and $\length(M\otimes_{R}N)<\infty$. If $j$ is an integer, then
\[(-1)^{\grade_{R}{M}+j}\xi_{j}^{R}(M,N)=\chi^{R}(M,N)+(-1)^{\grade_{R}{M}+j}\overline{\xi}_{j-1}^{R}(M,N).\]
In particular, if $\chi^{R}(M,N)=0$, then $\xi_{j}^{R}(M,N)=\overline{\xi}_{j-1}^{R}(M,N)$.
\end{lemma}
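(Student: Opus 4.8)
The plan is to reduce the identity to Chan's formula (\ref{marksaidso}) by an elementary manipulation of alternating sums. First I would observe that the hypotheses make every term in the statement meaningful: since $R$ is a complete intersection with $\pdim_{R}M<\infty$ we have $\ext_{R}^{i}(M,N)=0$ for $i>\pdim_{R}M$, and since $\length(M\otimes_{R}N)<\infty$ the modules $\ext_{R}^{i}(M,N)$ and $\tor_{i}^{R}(M,N)$ are all supported only at $m$, hence of finite length. Thus $\xi^{R}(M,N)$, $\xi_{j}^{R}(M,N)$, $\overline{\xi}_{j-1}^{R}(M,N)$, and $\chi^{R}(M,N)$ are all defined, and Chan's formula (\ref{marksaidso}) applies.

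Writing $e_{i}=\length(\ext_{R}^{i}(M,N))$ with the convention $e_{i}=0$ for $i<0$, the heart of the argument is the purely formal identity
\[\xi_{j}^{R}(M,N)=(-1)^{j}\,\xi^{R}(M,N)+\overline{\xi}_{j-1}^{R}(M,N),\]
valid for every integer $j$ and requiring nothing beyond finiteness of the $e_{i}$. To establish it I would start from $(-1)^{j}\xi_{j}^{R}(M,N)=\sum_{i\geq j}(-1)^{i}e_{i}$, rewrite this as $\xi^{R}(M,N)-\sum_{i=0}^{j-1}(-1)^{i}e_{i}$, and then re-index the truncated sum in the definition of $\overline{\xi}_{j-1}^{R}(M,N)$ via the substitution $k=j-1-i$ to get $\sum_{i=0}^{j-1}(-1)^{i}e_{i}=(-1)^{j-1}\overline{\xi}_{j-1}^{R}(M,N)$. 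Combining the two and multiplying through by $(-1)^{j}$ yields the displayed identity.

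To finish, I would multiply this identity by $(-1)^{\grade_{R}M+j}$ and use $(-1)^{2j}=1$ to obtain
\[(-1)^{\grade_{R}M+j}\xi_{j}^{R}(M,N)=(-1)^{\grade_{R}M}\xi^{R}(M,N)+(-1)^{\grade_{R}M+j}\overline{\xi}_{j-1}^{R}(M,N),\]
and then replace $(-1)^{\grade_{R}M}\xi^{R}(M,N)$ by $\chi^{R}(M,N)$ using (\ref{marksaidso}). The last assertion is then immediate: if $\chi^{R}(M,N)=0$ the formula reads $(-1)^{\grade_{R}M+j}\xi_{j}^{R}(M,N)=(-1)^{\grade_{R}M+j}\overline{\xi}_{j-1}^{R}(M,N)$, and cancelling the common sign gives $\xi_{j}^{R}(M,N)=\overline{\xi}_{j-1}^{R}(M,N)$.

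There is no serious obstacle here: the proof is essentially sign bookkeeping. The only points that call for a little care are the re-indexing in the definition of $\overline{\xi}$ and confirming that the finiteness hypotheses force the alternating sum defining $\xi^{R}(M,N)$ to terminate, so that Chan's formula (\ref{marksaidso}) is available; the complete-intersection and finite-projective-dimension hypotheses enter only for that purpose, while the formal identity in the second paragraph holds in complete generality.
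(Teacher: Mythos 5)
Your proof is correct and follows essentially the same approach as the paper: both deduce the identity from Chan's formula $\chi^{R}(M,N)=(-1)^{\grade_{R}M}\xi^{R}(M,N)$ together with the elementary decomposition $\xi^{R}(M,N)=(-1)^{j-1}\overline{\xi}_{j-1}^{R}(M,N)+(-1)^{j}\xi_{j}^{R}(M,N)$, followed by sign bookkeeping. The only difference is that you spell out the re-indexing and the finiteness justifications more explicitly than the paper does.
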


\begin{proof}
Letting $g=\grade_{R}{M}$, note that \cite[Theorem 5]{jeanchan} gives the first equality below
\begin{align*}
\chi^{R}(M,N) = (-1)^{g}\xi^{R}(M,N) &= (-1)^{g}[(-1)^{j-1}\overline{\xi}_{j-1}^{R}(M,N)+(-1)^{j}\xi_{j}^{R}(M,N)] \\
&= (-1)^{g+j-1}\overline{\xi}_{j-1}^{R}(M,N)+(-1)^{g+j}\xi_{j}^{R}(M,N).\qedhere
\end{align*}
\end{proof}

Part 1 of the following gives part of Theorem \ref{maintheorem}.
\begin{lemma} \label{signsofpartialeulerforms} Let $R$ be an unramified regular local ring and let $M$ and $N$ be finitely generated modules with $\length(M\otimes_{R}N)<\infty$. If $j$ is an integer with $1\leq j\leq\grade_{R}{M}$, then the following hold. 
\begin{enumerate}
\item If $\dim{M}+\dim{N}<\dim{R}$, then $\xi_{j}^{R}(M,N)\geq 0$, and $\xi_{j}^{R}(M,N)=0$ if and only if $\grade_{R}(M,N)\geq j$.
\item If $\dim{M}+\dim{N}=\dim{R}$ and $\grade_{R}{M}+j$ is even, then $\xi_{j}^{R}(M,N)>0$.
\item If $\dim{M}+\dim{N}=\dim{R}$, $\grade_{R}{M}+j$ is odd, and $\ext_{R}^{j-1}(M,N)=0$, then $\xi_{j}^{R}(M,N)<0$.
\end{enumerate}
\end{lemma}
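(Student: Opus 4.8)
The plan is to convert each partial Euler form into a partial Euler characteristic of a carefully chosen auxiliary module, then invoke Lemma~\ref{superjeanchanformula}, Serre's vanishing and positivity theorems, the Hochster--Lichtenbaum theorem (Theorem~\ref{hochsterlichtenbaumtheorem}), and the classical non-negativity of partial Euler characteristics over an unramified regular local ring. Lemma~\ref{superjeanchanformula} is what lets me trade $\xi_j^R(M,N)$ for $\overline{\xi}_{j-1}^R(M,N)$ (up to adding $\pm\chi^R(M,N)$), and $\overline{\xi}_{j-1}^R(M,N)$ is a finite alternating sum of lengths of the \emph{low} Ext modules $\ext_R^k(M,N)$ with $k\le j-1\le\grade_R M-1$, which is exactly what the auxiliary module will control.

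First I would build the auxiliary module. Since $R$ is regular, $\pdim_R M<\infty$; fix a minimal free resolution $F_\bullet$ of $M$, put $g=\grade_R M$ (so $1\le j\le g$ and $g\ge 1$), and write $(-)^{*}=\Hom_R(-,R)$. By Proposition~\ref{gradeproposition}, $\ext_R^k(M,R)=0$ for $k<g$, so the truncated dual complex
\[0\to F_0^{*}\to F_1^{*}\to\cdots\to F_{g-1}^{*}\to F_g^{*}\to Q\to 0,\qquad Q:=\coker\!\big(F_{g-1}^{*}\to F_g^{*}\big),\]
is exact; hence $\pdim_R Q\le g$. Tensoring this free resolution of $Q$ with $N$ and comparing it, term by term, with $\Hom_R(F_\bullet,N)\cong F_\bullet^{*}\otimes_R N$ yields natural isomorphisms $\ext_R^k(M,N)\cong\tor_{g-k}^R(Q,N)$ for $0\le k\le g-1$ (at the top degree one only gets an inclusion $\ext_R^g(M,N)\hookrightarrow Q\otimes_R N$). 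A sign computation then turns the definition of $\overline{\xi}$ into
\[\overline{\xi}_m^R(M,N)=\chi_{g-m}^R(Q,N)\qquad\text{for all }0\le m\le g-1.\]
Since $g-m\ge 1$, and since $\tor_i^R(Q,N)$ has finite length for $i\ge 1$ (being an Ext module of $M$ against $N$) and vanishes for $i>g$, each $\chi_{g-m}^R(Q,N)$ is defined, is $\ge 0$, and vanishes precisely when $\ext_R^k(M,N)=0$ for $0\le k\le m$, i.e.\ precisely when $\grade_R(M,N)\ge m+1$.

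Then I would run the three cases. For (1), $\dim M+\dim N<\dim R$ forces $\chi^R(M,N)=0$ by Serre's vanishing theorem (see the proof of Proposition~\ref{xizerosummary}), so Lemma~\ref{superjeanchanformula} gives $\xi_j^R(M,N)=\overline{\xi}_{j-1}^R(M,N)=\chi_{g-j+1}^R(Q,N)\ge 0$, which is $0$ iff $\ext_R^k(M,N)=0$ for $0\le k\le j-1$, i.e.\ iff $\grade_R(M,N)\ge j$. For (2) and (3), $\dim M+\dim N=\dim R$ forces $\chi^R(M,N)>0$ by Serre's positivity theorem (see the proof of Proposition~\ref{xizerosummary}). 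If $\grade_R M+j$ is even, Lemma~\ref{superjeanchanformula} gives $\xi_j^R(M,N)=\chi^R(M,N)+\overline{\xi}_{j-1}^R(M,N)=\chi^R(M,N)+\chi_{g-j+1}^R(Q,N)\ge\chi^R(M,N)>0$, proving (2). If $\grade_R M+j$ is odd, it gives $\xi_j^R(M,N)=\overline{\xi}_{j-1}^R(M,N)-\chi^R(M,N)$; the hypothesis $\ext_R^{j-1}(M,N)=0$ kills the leading term of $\overline{\xi}_{j-1}^R(M,N)$, so for $j\ge 2$ one has $\overline{\xi}_{j-1}^R(M,N)=-\overline{\xi}_{j-2}^R(M,N)=-\chi_{g-j+2}^R(Q,N)\le 0$, and for $j=1$ one has $\overline{\xi}_0^R(M,N)=\length(\Hom_R(M,N))=0$; either way $\xi_j^R(M,N)\le -\chi^R(M,N)<0$, proving (3).

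The hard part will be the construction of $Q$ and the isomorphisms $\ext_R^k(M,N)\cong\tor_{g-k}^R(Q,N)$ for $k<g$: this is exactly where the hypothesis $j\le\grade_R M$ is used, since it is the vanishing $\ext_R^k(M,R)=0$ for $k<g$ that makes the truncated dual of $F_\bullet$ a genuine free resolution of $Q$, and one must keep track of the fact that the isomorphism fails (becoming merely an inclusion) in the top degree $g$. Once that is in place, the rest is the sign bookkeeping above together with the quoted vanishing, positivity, and Hochster--Lichtenbaum theorems.
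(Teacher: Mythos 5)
Your proposal is correct, but it takes a noticeably different route from the paper. The paper's proof is very short: after reducing via Lemma~\ref{superjeanchanformula} and Serre's vanishing/positivity theorems, it simply cites two results from \cite{soto} --- Lemma~2.6 there (nonnegativity of $\overline{\xi}_{j-1}$ and the characterization of when it vanishes) and Theorem~2.9 there (the Ext-rigidity fact that $\ext_R^{j-1}(M,N)=0$ with $j-1\le\grade_R M$ forces $\ext_R^i(M,N)=0$ for all $i\le j-1$, hence $\overline{\xi}_{j-1}=0$). You instead reprove the needed facts from scratch: you build the ``Auslander transpose--type'' module $Q=\coker(F_{g-1}^*\to F_g^*)$ from the dual of a truncated minimal free resolution of $M$, use $\grade_R M=g$ to see that this dual complex is a genuine free resolution of $Q$, derive $\ext_R^k(M,N)\cong\tor_{g-k}^R(Q,N)$ for $0\le k\le g-1$, convert $\overline{\xi}_m^R(M,N)$ into $\chi_{g-m}^R(Q,N)$, and then invoke Hochster--Lichtenbaum (Theorem~\ref{hochsterlichtenbaumtheorem}) together with nonnegativity of partial Euler characteristics over unramified regular local rings. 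Your sign bookkeeping and index shifts all check out, your handling of the boundary degree ($\ext_R^g$ versus $Q\otimes_R N$) is correct, and in case~(3) you even end up with a slightly weaker intermediate inequality $\overline{\xi}_{j-1}\le 0$ (rather than the paper's $\overline{\xi}_{j-1}=0$), which still suffices. What this buys you is self-containedness --- your argument does not lean on \cite{soto} at all, and it makes explicit the Ext-to-Tor translation that is implicit behind the cited lemmas --- at the cost of developing the $Q$-machinery inline; the paper's proof is shorter because it delegates exactly this work to the cited references. One small caveat: you appeal to ``the classical non-negativity of partial Euler characteristics,'' whereas the paper's statement of Theorem~\ref{hochsterlichtenbaumtheorem} only records the vanishing criterion; the nonnegativity is indeed part of Hochster's and Lichtenbaum's theorems, but a citation to \cite{hochster2} and \cite{lichtenbaum} would make that step explicit.
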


\begin{proof}
1. Since $\dim{M}+\dim{N}<\dim{R}$ implies $\chi^{R}(M,N)=0$ by Serre's Vanishing Theorem (see \cite{serre}), we have $\xi_{j}^{R}(M,N)=\overline{\xi}_{j-1}^{R}(M,N)$ by Lemma \ref{superjeanchanformula}. Using \cite[Lemma 2.6]{soto}, if $j\geq 2$, then $\overline{\xi}_{j-1}^{R}(M,N)\geq 0$, and $\overline{\xi}_{j-1}^{R}(M,N)=0$ if and only if $\ext_{R}^{i}(M,N)=0$ for $i\leq j-1$. The case $j=1$ is clear.\newline

2. Note that $\overline{\xi}_{j-1}^{R}(M,N)\geq 0$ for $2\leq j\leq \grade_{R}{M}$ by \cite[Lemma 2.6]{soto} and we always have $\overline{\xi}_{0}^{R}(M,N)\geq 0$. Lemma \ref{superjeanchanformula} gives the equality below
\[\xi_{j}^{R}(M,N)=\chi^{R}(M,N)+\overline{\xi}_{j-1}^{R}(M,N)\geq \chi^{R}(M,N)>0,\]
and Serre's Positivity Theorem (see \cite{serre}) gives the second inequality.\newline

3. By Serre's Positivity Theorem we have $\chi^{R}(M,N)>0$, and so by Lemma \ref{superjeanchanformula} we have $\overline{\xi}_{j-1}^{R}(M,N)>\xi_{j}^{R}(M,N)$, which implies it is enough to show $\overline{\xi}_{j-1}^{R}(M,N)=0$. Since $\ext_{R}^{j-1}(M,N)=0$ and since $j-1\leq\grade_{R}{M}$, $\ext_{R}^{i}(M,N)=0$ for $i\leq j-1$ by \cite[Theorem 2.9]{soto}, and so $\overline{\xi}_{j-1}^{R}(M,N)=0$.
\end{proof}

Before proving the rest of Theorem \ref{maintheorem}, we first need a lemma.
\begin{lemma} \label{lemmaformainresult} Let $(R,m,k)$ be an unramified regular local ring, let $M$ and $N$ be finitely generated modules with $\length(M\otimes_{R}N)<\infty$, and assume $j$ is an integer with $1\leq j\leq \pdim_{R}M$. Then $\chi_{j}^{R}(M,N)=0$ if and only if $\pdim_{R}M-j+1\leq \grade_{R}(M,N)$.
\end{lemma}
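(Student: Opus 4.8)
The plan is to convert each side of the claimed equivalence into a statement about a single homological invariant of $N$ and then match them. Throughout we may assume $M$ and $N$ are nonzero, since otherwise both conditions hold vacuously. Because $R$ is regular, $p:=\pdim_R M<\infty$, so $\tor_i^R(M,N)=0$ for $i>p$; and because $\length(M\otimes_R N)<\infty$ we have $\supp\tor_i^R(M,N)\subseteq\supp M\cap\supp N=\{m\}$, so every $\tor_i^R(M,N)$ has finite length and $\chi_j^R(M,N)$ is defined. By Proposition \ref{gradeproposition}, $\grade_R(M,N)=\inf\{\depth N_{\mathfrak p}:\mathfrak p\in\supp M\cap\supp N\}=\depth_R N$, so it suffices to prove that $\chi_j^R(M,N)=0$ if and only if $\depth_R N\geq p-j+1$.

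For the first reduction, apply the Hochster--Lichtenbaum theorem (Theorem \ref{hochsterlichtenbaumtheorem}), which is available since the relevant Tor modules have finite length and $j\geq 1$: it gives $\chi_j^R(M,N)=0$ if and only if $\tor_i^R(M,N)=0$ for all $i\geq j$, i.e. $\sup\{i:\tor_i^R(M,N)\neq 0\}<j$ (this supremum is $\geq 0$ since $M\otimes_R N\neq 0$). The heart of the argument is then the formula
\[
\sup\{\,i:\tor_i^R(M,N)\neq 0\,\}=\max\{\,0,\ p-\depth_R N\,\}.
\]
Granting this, the inequality $\sup<j$ is equivalent to $\max\{0,p-\depth_R N\}<j$, which (using $j\geq 1$) holds precisely when $\depth_R N\geq p-j+1$; combined with the previous sentence this is exactly what we want.

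To prove the displayed formula I will induct on $\depth_R N$. When $\depth_R N=0$ we have $m\in\ass N$, so there is an embedding $k\hookrightarrow N$; writing $N'$ for its cokernel, the long exact sequence in Tor contains $\tor_{p+1}^R(M,N')\to\tor_p^R(M,k)\to\tor_p^R(M,N)$, and $\tor_{p+1}^R(M,N')=0$ since $\pdim_R M=p$, so $\tor_p^R(M,k)$ injects into $\tor_p^R(M,N)$; as $\tor_p^R(M,k)\neq 0$ (minimal free resolution), $\tor_p^R(M,N)\neq 0$ and hence $\sup=p=\max\{0,p-0\}$. For the inductive step, choose an $N$-regular element $t\in m$ and feed $0\to N\xrightarrow{t}N\to N/tN\to 0$ into the Tor long exact sequence, which breaks into short exact sequences $0\to\tor_i^R(M,N)/t\,\tor_i^R(M,N)\to\tor_i^R(M,N/tN)\to(0:_{\tor_{i-1}^R(M,N)}t)\to 0$. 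Since each $\tor_i^R(M,N)$ has finite length, $t$ is nilpotent on it, so $(0:_{\tor_{i-1}^R(M,N)}t)\neq 0$ whenever $\tor_{i-1}^R(M,N)\neq 0$, while Nakayama gives $\tor_i^R(M,N)/t\,\tor_i^R(M,N)\neq 0$ whenever $\tor_i^R(M,N)\neq 0$. Hence $\tor_i^R(M,N/tN)\neq 0$ exactly when $\tor_i^R(M,N)\neq 0$ or $\tor_{i-1}^R(M,N)\neq 0$, so $\sup\{i:\tor_i^R(M,N/tN)\neq 0\}=\sup\{i:\tor_i^R(M,N)\neq 0\}+1$. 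Applying the inductive hypothesis to $N/tN$ (nonzero by Nakayama, with $\depth_R(N/tN)=\depth_R N-1$ and $\length(M\otimes_R N/tN)<\infty$) and subtracting $1$ yields the formula for $N$.

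The main obstacle is the Tor-length formula in the last paragraph; this is the only place the finite-length hypothesis is genuinely used (without it the formula can fail), and everything else is bookkeeping around Hochster--Lichtenbaum and Proposition \ref{gradeproposition}. A minor point to check carefully is the behavior of $\max\{0,p-\depth_R N\}$ at the ends of the range $1\leq j\leq\pdim_R M$, but this is routine case analysis.
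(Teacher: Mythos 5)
Your proof is correct and follows essentially the same route as the paper: reduce via Hochster--Lichtenbaum to the statement $q_R(M,N) := \sup\{i : \tor_i^R(M,N)\neq 0\} < j$, identify $\grade_R(M,N)$ with $\depth_R N$ using Proposition \ref{gradeproposition}, and relate $q_R(M,N)$ to $\pdim_R M - \depth_R N$. The one real difference is that where the paper simply cites the depth formula $q_R(M,N) = \depth R - \depth M - \depth N$ from Serre's book, you reprove it from scratch by induction on $\depth_R N$ --- this is essentially Auslander's original argument for the depth formula, and it is correct, though it does add length to what is otherwise a short reduction. Two small remarks: your $\max\{0,\,p-\depth_R N\}$ is a safe precaution, but the truncation is never active here, since $\length(M\otimes_R N)<\infty$ over the regular local ring $R$ forces $\dim M + \dim N \le \dim R$, hence $\depth_R N \le \dim N \le \dim R - \dim M \le \dim R - \depth_R M = \pdim_R M$ by Auslander--Buchsbaum; and in the inductive step it is worth flagging explicitly that the short exact sequences extracted from the Tor long exact sequence give the ``if and only if'' for nonvanishing because a finite-length $\tor_i$ has every element of $m$ acting nilpotently, so both the Nakayama quotient and the $t$-torsion submodule are nonzero whenever the ambient module is --- which you do say, and it is exactly the right point.
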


\begin{proof}
Let
\[q_{R}(M,N) = \sup\{n|\tor_{n}^{R}(M,N)\neq 0\},\]
the index of the last nonzero Tor of $M$ against $N$. By Theorem \ref{hochsterlichtenbaumtheorem}, $\chi_{j}^{R}(M,N)=0$ if and only if $j-1\geq q_{R}(M,N)$. Since 
\[q_{R}(M,N)=\depth{R}-\depth{M}-\depth{N}\]
by \cite[Theorem on page 110]{serre}, $j-1\geq q_{R}(M,N)$ if and only if $j-1\geq \pdim_{R}M-\depth{N}$. Since $\length(M\otimes_{R}N)<\infty$,
\[\grade_{R}(M,N)=\inf\{\depth{N_{p}}|p\in\{m\}\}=\depth{N},\]
and so $j-1\geq \pdim_{R}M-\depth{N}$ if and only if $j-1\geq \pdim_{R}M-\grade_{R}(M,N)$.
\end{proof}

The following makes up the other half of Theorem \ref{maintheorem} from the introduction. 
\begin{theorem} \label{mainresult} Let $R$ be an unramified regular local ring, let $M$ and $N$ be finitely generated modules with $\length(M\otimes_{R}N)<\infty$ and $\dim{M}+\dim{N}<\dim{R}$, and assume $j$ is an integer with 
\[1\leq j\leq \grade_{R}M.\]
Then the following are equivalent.
\begin{enumerate}
\item $\xi_{j}^{R}(M,N)=0$.
\item $\grade_{R}(M,N)\geq j$.
\item $\chi_{\pdim_{R}M-j+1}^{R}(M,N)=0$.
\end{enumerate}
\end{theorem}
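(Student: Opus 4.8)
The strategy is to prove the equivalences by combining the earlier lemmas, which already do most of the work. Note first that an unramified regular local ring is in particular a local complete intersection, and since $\dim M + \dim N < \dim R$ and $R$ is regular we automatically have $\pdim_R M < \infty$, $\pdim_R N < \infty$, and (by Serre) $\chi^R(M,N) = 0$; moreover $\pdim_R M = \grade_R M$ since $R$ is regular and $M$ has finite length quotient behavior is not needed, only that $\depth R - \depth M = \pdim_R M$ and $\grade_R M = \depth R - \dim \operatorname{coker}$... more simply, $\grade_R M = \pdim_R M$ holds for any module over a regular local ring because $\grade_R M = \depth R - \operatorname{cmd}$ arguments, but in fact the clean statement is $\grade_R M = \pdim_R M$ when $R$ is Cohen--Macaulay and $\pdim_R M < \infty$ via Auslander--Buchsbaum plus $\grade_R M = \depth R - \dim R/\ann M$ is false in general; the safe fact I will use is $\grade_R M = \pdim_R M$ for $R$ regular local, which follows from Auslander--Buchsbaum ($\pdim_R M = \depth R - \depth M$) together with the fact that over a Cohen--Macaulay ring $\grade_R M = \operatorname{height} \ann M$ and $\depth R - \depth M = \operatorname{height}\ann M$ when $M$ is perfect — but finite-length-quotient is not assumed, so instead I will simply invoke that $\grade_R(M,R)$ equals the length of the longest $R$-regular sequence in $\ann M$, which over a regular (hence Cohen--Macaulay) ring equals $\operatorname{ht}\ann M$, and this may be strictly less than $\pdim_R M$. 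To avoid this subtlety I will not assert $j \le \pdim_R M$ directly but derive it: since $1 \le j \le \grade_R M \le \pdim_R M$, the hypothesis range of Lemma \ref{lemmaformainresult} is met.

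With that in place the argument is short. The equivalence of (1) and (2) is precisely part 1 of Lemma \ref{signsofpartialeulerforms}: since $\dim M + \dim N < \dim R$ and $1 \le j \le \grade_R M$, that lemma gives $\xi_j^R(M,N) \ge 0$ with equality if and only if $\grade_R(M,N) \ge j$. For the equivalence of (2) and (3), I apply Lemma \ref{lemmaformainresult} with the integer $\pdim_R M - j + 1$ in place of its ``$j$'': one must check $1 \le \pdim_R M - j + 1 \le \pdim_R M$, which is immediate from $1 \le j \le \grade_R M \le \pdim_R M$. Lemma \ref{lemmaformainresult} then says $\chi_{\pdim_R M - j + 1}^R(M,N) = 0$ if and only if $\pdim_R M - (\pdim_R M - j + 1) + 1 \le \grade_R(M,N)$, i.e. $j \le \grade_R(M,N)$, which is exactly (2).

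I expect no serious obstacle: the theorem is essentially a repackaging of Lemmas \ref{signsofpartialeulerforms}, \ref{lemmaformainresult}, and \ref{superjeanchanformula} plus Serre vanishing. The only point requiring a moment's care is the index bookkeeping for the reindexed application of Lemma \ref{lemmaformainresult} and confirming the inequalities $1 \le \pdim_R M - j + 1 \le \pdim_R M$, together with noting that the regular local ring is a complete intersection so that Lemma \ref{signsofpartialeulerforms} (which is stated for unramified regular local rings anyway) and the finiteness of projective dimensions all apply. I will also remark explicitly why $\pdim_R M < \infty$ and $\pdim_R N < \infty$ (regularity of $R$), since those finiteness hypotheses are invoked implicitly through Lemma \ref{superjeanchanformula} inside the proof of Lemma \ref{signsofpartialeulerforms}.
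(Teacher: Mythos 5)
Your proof is correct and takes essentially the same route as the paper: cite part 1 of Lemma \ref{signsofpartialeulerforms} for (1) $\Leftrightarrow$ (2) and apply Lemma \ref{lemmaformainresult} (with $j' = \pdim_R M - j + 1$, valid since $1 \le j \le \grade_R M \le \pdim_R M < \infty$ by regularity) for (2) $\Leftrightarrow$ (3). The lengthy digression about whether $\grade_R M = \pdim_R M$ is harmless but unnecessary, since you only need $\grade_R M \le \pdim_R M$, which you ultimately use.
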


\begin{proof}
The equivalence of (1) and (2) is by Lemma \ref{signsofpartialeulerforms}, and the equivalence of (2) and (3) follows from Lemma \ref{lemmaformainresult}.
\end{proof}

We end this section with three examples. By Lemma \ref{signsofpartialeulerforms}(3), if $\ext_{R}^{j-1}(M,N)=0$ then $\xi_{j}^{R}(M,N)<0$. The example below shows that the converse is false. 
\begin{example} \label{nawajexample} Let $R=\mathbb{Q}[x,y,z]$, $M=R/(y^{2},z^{2})$, and $N=R/(x)(x,y,z)$. Note that $\length(M\otimes_{R}N)<\infty$,
\[\dim{M}+\dim{N}=1+2=3=\dim{R},\]
and
\[\grade_{R}{M}=\dim{R}-\dim{M}=3-1=2.\]
Letting $j=1$, we see that $\grade_{R}{M}+j=2+1=3$ is odd. By a computation in Macaulay2 \cite{macaulay2} we get $\xi_{1}^{R}(M,N)=-3<0$, but $\ext_{R}^{0}(M,N)\neq 0$.
\end{example}

By Lemma \ref{signsofpartialeulerforms}(2), if $\dim{M}+\dim{N}=\dim{R}$ and $\grade_{R}{M}+j$ even then $\xi_{j}^{R}(M,N)>0$. The following example shows that if $\grade_{R}{M}+j$ is odd instead of even, $\xi_{j}^{R}(M,N)$ can be zero.
\begin{example} \label{nawajexampletwo} Let $R=\mathbb{Q}[x,y,z]$, $M=R/(y,z)$, and $N=R/(x)(x,y,z)$. Note that $\length(M\otimes_{R}N)<\infty$,
\[\dim{M}+\dim{N}=1+2=3=\dim{R},\]
and
\[\grade_{R}{M}=\dim{R}-\dim{M}=3-1=2.\]
Letting $j=1$, we see that $\grade_{R}{M}+j=2+1=3$ is odd. By a computation in Macaulay2 \cite{macaulay2} we get $\xi_{1}^{R}(M,N)=0$, and $\ext_{R}^{0}(M,N)\neq 0$.
\end{example}

Assume $\dim{M}+\dim{N}=\dim{R}$ and $\grade_{R}{M}+j$ odd. Example \ref{nawajexample} shows that $\xi_{j}^{R}(M,N)$ can be negative and Example \ref{nawajexampletwo} shows that $\xi_{j}^{R}(M,N)$ can be zero. The next example shows that $\xi_{j}^{R}(M,N)$ can be positive.
\begin{example} Let $R=\mathbb{Q}[x,y,z]$, $M=R/(y,z)$, and $N=R/(x)(x^{2},y,z)$. Note that $\length(M\otimes_{R}N)<\infty$,
\[\dim{M}+\dim{N}=1+2=3=\dim{R},\]
and
\[\grade_{R}{M}=\dim{R}-\dim{M}=3-1=2.\]
Letting $j=1$, we see that $\grade_{R}{M}+j=2+1=3$ is odd. By a computation in Macaulay2 \cite{macaulay2} we get $\xi_{1}^{R}(M,N)=1>0$, and $\ext_{R}^{0}(M,N)\neq 0$.
\end{example}

\section{On a Question of Jorgensen}
The goal of this section is to prove Theorem \ref{moreimprovedjorgensenquestiontheorem}. In \cite[Question 2.7]{jorgensen} Jorgensen asked if $\ext_{R}^{n}(M,M)$ must be nonzero for $0\leq n\leq\pdim_{R}M$ for a finitely generated module $M$ of finite projective dimension over a local complete intersection. The answer to this question is yes if $R$ is regular \cite{jothilingam} or an unramified hypersurface \cite[Proposition 5.4]{dao2}. Also, in \cite[Theorem 2.9]{soto} it was shown that if $M$ is a finitely generated module, not necessarily of finite projective dimension, over an unramified hypersurface $R$, then $\ext_{R}^{n}(M,M)\neq 0$ for $0\leq n\leq \grade_{R}M$. Note that we always have $\grade_{R}M\leq\pdim_{R}M$, and equality holds when $M$ is CM and has finite projective dimension.\newline

Before proving Theorem \ref{moreimprovedjorgensenquestiontheorem}, we first prove three lemmas.
\begin{lemma} \label{xizero} Let $(R,m,k)$ be a Noetherian local ring and let $M$ and $N$ be finitely generated modules with $\pdim_{R}M<\infty$ and $\length(N)<\infty$. If $\grade_{R}M>0$, then $\xi^{R}(M,N)=0$ and $\chi^{R}(M,N)=0$.
\end{lemma}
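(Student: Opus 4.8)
The plan is to reduce to the case $N=k$ by dévissage and then to compute with a minimal free resolution. First I would record that the two invariants are defined: since $\pdim_R M<\infty$ and $\length(N)<\infty$, for a finite free resolution $F_\bullet$ of $M$ each $\tor_i^R(M,N)$ is a subquotient of $F_i\otimes_R N$ and each $\ext_R^i(M,N)$ a subquotient of $\Hom_R(F_i,N)$, hence of finite length, and both vanish for $i>\pdim_R M$. (If $M=0$ the statement is trivial, so assume $M\neq 0$.)

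Next I would use additivity. A short exact sequence $0\to N'\to N\to N''\to 0$ of finite-length modules gives \emph{bounded} long exact sequences built from the $\tor_i^R(M,-)$'s and from the $\ext_R^i(M,-)$'s, all terms of finite length; since the alternating sum of lengths along a bounded exact complex is zero, one gets $\chi^R(M,N)=\chi^R(M,N')+\chi^R(M,N'')$ and $\xi^R(M,N)=\xi^R(M,N')+\xi^R(M,N'')$. Running this along a composition series of $N$, whose factors are all isomorphic to $k$, yields $\chi^R(M,N)=\length(N)\,\chi^R(M,k)$ and $\xi^R(M,N)=\length(N)\,\xi^R(M,k)$. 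So it suffices to show $\chi^R(M,k)=0=\xi^R(M,k)$.

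For that, take a minimal free resolution $F_\bullet$ of $M$ with Betti numbers $\beta_i=\beta_i^R(M)$. Minimality makes the differentials vanish after $-\otimes_R k$ and after $\Hom_R(-,k)$, so $\length\tor_i^R(M,k)=\beta_i=\length\ext_R^i(M,k)$ for every $i$; hence $\chi^R(M,k)=\sum_i(-1)^i\beta_i=\xi^R(M,k)$, and it remains to see this alternating sum is $0$. Here I invoke $\grade_R M>0$: by Proposition \ref{gradeproposition} there is an $R$-regular element $x\in\ann M$. Such an $x$ lies in no associated prime, in particular in no minimal prime $\mathfrak{p}$ of $R$, so $x$ is a unit in $R_\mathfrak{p}$; since $xM=0$ this forces $M_\mathfrak{p}=0$. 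Localizing $F_\bullet$ at $\mathfrak{p}$ keeps it exact, producing a bounded exact complex of finite-length $R_\mathfrak{p}$-modules (recall $R_\mathfrak{p}$ is Artinian) with homology only $M_\mathfrak{p}=0$; comparing Euler characteristics gives $\big(\sum_i(-1)^i\beta_i\big)\length_{R_\mathfrak{p}}(R_\mathfrak{p})=\length_{R_\mathfrak{p}}(M_\mathfrak{p})=0$, and since $\length_{R_\mathfrak{p}}(R_\mathfrak{p})>0$ we conclude $\sum_i(-1)^i\beta_i=0$.

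I do not expect a genuine obstacle: the only steps needing care are the dévissage (bounded long exact sequences, alternating sums of lengths) and the exactness of localization, both routine. One could replace the final step by the slogan "$\sum_i(-1)^i\beta_i=\rank_R M=0$ because $M$ is annihilated by a nonzerodivisor," but passing to a minimal prime sidesteps any subtlety about ranks over non-domains.
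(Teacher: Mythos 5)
Your proof is correct and follows essentially the same route as the paper's: reduce to $N=k$ by additivity of $\xi^R(M,-)$ and $\chi^R(M,-)$ along a composition series, then observe that minimality of the free resolution turns both invariants into $\sum_i(-1)^i\beta_i(M)$, which vanishes because $\grade_R M>0$. The only difference is that where the paper simply cites \cite[Theorem 19.8]{matsumura} for $\sum_i(-1)^i\beta_i(M)=0$, you supply a self-contained proof by localizing the resolution at a minimal prime; that argument is sound and is essentially the standard proof of the cited theorem.
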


\begin{proof}
Note that $\grade_{R}M>0$ is equivalent to $\ann{M}$ containing a nonzerodivisor. We only prove $\xi^{R}(M,N)=0$. To show $\chi^{R}(M,N)=0$ is similar. Since $\ann{M}$ contains a nonzerodivisor, if $\length(N)=1$, then \cite[Theorem 19.8]{matsumura} gives us the fourth equality below:
\[\xi^{R}(M,N) = \xi^{R}(M,k) = \sum_{i=0}^{\pdim_{R}M}(-1)^{i}\length(\ext_{R}^{i}(M,k)) = \sum_{i=0}^{\pdim_{R}M}(-1)^{i}\beta_{i}(M) = 0,\]
where $\beta_{i}(M)$ is the $ith$ Betti number of $M$. An induction on $\length(N)$ using that $\xi^{R}(M,-)$ is additive on short exact sequences finishes the proof.
\end{proof}

\begin{lemma} \label{lemmaforjorgensensquestiontheorem} Let $R$ be a Noetherian local ring with $\dim{R}\leq 2$, and let $M$ and $N$ be nonzero finitely generated modules with $\pdim_{R}M<\infty$ and $\length(N)<\infty$. If $\grade_{R}M>0$, then $\ext_{R}^{n}(M,N)\neq 0$ and $\tor_{n}^{R}(M,N)\neq 0$ for $0\leq n\leq \pdim_{R}M$.
\end{lemma}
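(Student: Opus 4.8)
The plan is to pin down $p := \pdim_R M$ using the Auslander--Buchsbaum formula: since $\pdim_R M < \infty$ we get $p = \depth R - \depth M \le \depth R \le \dim R \le 2$, and since $\grade_R M > 0$ forces $\ann_R M$ to contain a nonzerodivisor, $M$ is not free, so in fact $p \in \{1,2\}$. Thus it suffices to establish non-vanishing of $\ext_R^n(M,N)$ and $\tor_n^R(M,N)$ at the two endpoints $n = 0$ and $n = p$, and, only in the case $p = 2$, at the single intermediate index $n = 1$.

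I would treat the endpoints by hand. For $n = 0$: as $N$ is nonzero of finite length, its socle $(0 :_N m)$ is a nonzero $k$-vector space, so there is an injection $k \hookrightarrow N$; applying $\Hom_R(M,-)$ gives an injection $\Hom_R(M,k) \hookrightarrow \Hom_R(M,N)$, and $\Hom_R(M,k)$ is a nonzero $k$-vector space because $M \ne 0$, so $\ext_R^0(M,N) = \Hom_R(M,N) \ne 0$. Dually, $N$ surjects onto $k$, hence $M \otimes_R N$ surjects onto $M \otimes_R k = M/mM \ne 0$, so $\tor_0^R(M,N) \ne 0$. For $n = p$: fix a minimal free resolution $F_\bullet$ of $M$, so $F_p \ne 0$, $F_{p+1} = 0$, and the top differential $\partial_p$ has all matrix entries in $m$. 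Then $\ext_R^p(M,N)$ is the cokernel of a map whose image lies in $m\,\Hom_R(F_p,N) = m\,N^{\beta_p}$, so it surjects onto $(N/mN)^{\beta_p} \ne 0$; and $\tor_p^R(M,N) = \ker(\partial_p \otimes_R N \colon F_p \otimes_R N \to F_{p-1}\otimes_R N)$ contains $(0:_N m)^{\beta_p}$, which is nonzero. (Here $\beta_p = \rank F_p \ge 1$.)

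The intermediate index, when $p = 2$, is where the content lies, and it comes for free from the vanishing of the Euler invariants. The hypotheses $\pdim_R M < \infty$, $\length(N) < \infty$, and $\grade_R M > 0$ are exactly those of Lemma~\ref{xizero}, which therefore gives $\chi^R(M,N) = 0$ and $\xi^R(M,N) = 0$. With $p = 2$ these identities read
\[\length(\tor_1^R(M,N)) = \length(M \otimes_R N) + \length(\tor_2^R(M,N)), \qquad \length(\ext_R^1(M,N)) = \length(\Hom_R(M,N)) + \length(\ext_R^2(M,N)).\]
Since the endpoint step already shows $M \otimes_R N \ne 0$ and $\Hom_R(M,N) \ne 0$, the right-hand sides are strictly positive, forcing $\tor_1^R(M,N) \ne 0$ and $\ext_R^1(M,N) \ne 0$; this completes the proof.

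The only genuinely delicate point is the use of $\dim R \le 2$: it is precisely this bound that keeps the list of intermediate cohomological degrees empty or a single index, so that the one linear relation furnished by $\chi^R(M,N) = 0$ (respectively $\xi^R(M,N) = 0$) suffices to propagate non-vanishing from an endpoint inward. For larger $\dim R$ a single such relation would no longer be enough, which is why this approach is confined to $\dim R \le 2$. Everything else is bookkeeping.
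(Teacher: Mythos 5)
Your proof is correct and follows essentially the same route as the paper's: reduce to the case $\pdim_R M = 2$, invoke Lemma~\ref{xizero} to get $\chi^R(M,N) = 0 = \xi^R(M,N)$, and read off non-vanishing of the middle Ext/Tor from the resulting three-term length identity. The only difference is cosmetic — you spell out the endpoint non-vanishing (socle/top and minimal-resolution arguments) that the paper treats as standard, and you phrase the final step as a direct computation rather than a contradiction.
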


\begin{proof}
We prove the statement for Ext. The argument for the Tor statement is similar. Since $\Hom_{R}(M,N)\neq 0$ and $\ext_{R}^{\pdim_{R}M}(M,N)\neq 0$, we can assume $\pdim_{R}M=\dim{R}=2$. By Lemma \ref{xizero} we have $\xi^{R}(M,N)=0$. If $\ext_{R}^{1}(M,N)=0$, then we would have
\[\xi^{R}(M,N) = \length(\Hom_{R}(M,N)) + \length(\ext_{R}^{2}(M,N)) > 0,\]
a contradiction.
\end{proof}

\begin{lemma} \label{gradeisthesamelocallyatminimalprime} Let $R$ be a CM ring, let $M$ be a finitely generated module, and choose $p\in\supp{M}$ so that $\dim{R/p}=\dim{M}$. Then $\grade_{R}{M}=\grade_{R_{p}}M_{p}$.
\end{lemma}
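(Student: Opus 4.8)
The plan is to compute both sides of the asserted equality directly from the characterizations of grade in Proposition~\ref{gradeproposition}, using the Cohen--Macaulay hypothesis only through its standard consequences (every localization is again Cohen--Macaulay, and Cohen--Macaulay rings are catenary and equidimensional).

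First I would record the trivial inequality. Taking $N=R$ in Proposition~\ref{gradeproposition} and noting $\supp R=\spec R$, we have $\grade_R M=\inf\{\depth R_q : q\in\supp M\}$, and likewise, since $\supp M_p$ consists exactly of the primes $qR_p$ with $q\in\supp M$ and $q\subseteq p$, and since $(R_p)_{qR_p}=R_q$, we get $\grade_{R_p}M_p=\inf\{\depth R_q : q\in\supp M,\ q\subseteq p\}$. The second infimum runs over a subset of the index set of the first, so $\grade_{R_p}M_p\geq \grade_R M$ with no hypothesis on $R$ at all.

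For the reverse inequality I would use that $R$ is Cohen--Macaulay: then each $R_q$ is Cohen--Macaulay, so $\depth R_q=\dim R_q=\height q$, and moreover $\height q=\dim R-\dim R/q$ for every prime $q$. Substituting, $\grade_R M=\dim R-\sup\{\dim R/q : q\in\supp M\}=\dim R-\dim M$, and the identical computation in $R_p$ gives $\grade_{R_p}M_p=\height p-\dim M_p$. Since $\dim R/p=\dim M$, the desired equality is thus equivalent to $\dim M_p=0$, i.e.\ to $M_p$ having finite length over $R_p$.

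The one step with real content is showing that $p$ is a minimal prime of $\ann M$; granting this, $(\ann M)R_p$ is $pR_p$-primary, so $M_p$ has finite length and we are done. To see $p$ is minimal over $\ann M$: if some minimal prime $\mathfrak q$ of $\ann M$ satisfied $\mathfrak q\subsetneq p$, then $\height\mathfrak q<\height p$ (by catenarity), whence $\dim R/\mathfrak q=\dim R-\height\mathfrak q>\dim R-\height p=\dim R/p=\dim M$, contradicting $\mathfrak q\in\supp M$. I expect this bookkeeping --- keeping the equidimensionality of the Cohen--Macaulay ring straight so that the hypothesis $\dim R/p=\dim M$ forces $p$ to have minimal height among primes in $\supp M$ --- to be the only obstacle; everything else is assembling Proposition~\ref{gradeproposition} with standard facts.
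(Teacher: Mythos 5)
Your proof is correct and follows essentially the same route as the paper's: both pass from grades to heights via Proposition~\ref{gradeproposition} and the Cohen--Macaulay hypothesis, and both use catenarity together with $\dim R/p=\dim M$ to identify the relevant infimum of heights with $\height p$. The one point you make explicit that the paper leaves implicit is that $p$ must be minimal over $\ann M$ (equivalently, $M_p$ has finite length), which is exactly what is needed for the paper's equality $\depth R_p=\grade_{R_p}M_p$.
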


\begin{proof}
Notice that 
\[\height{p} = \dim{R_{p}} = \depth{{R_{p}}} =\grade_{R_{p}}M_{p}.\]
Also, if $q\in\supp{M}$, then 
\[\dim{R}-\height{q}=\dim{R/q}\leq\dim{R/p}=\dim{R}-\height{p}\]
where the first and second equalities are because CM rings are catenary (\cite[Theorem 17.9]{matsumura}), and so $\height{p}\leq \height{q}$. This tells us that
\[\grade_{R}{M} = \inf\{\depth{R_{q}}|q\in\supp{M}\} = \inf\{\height{q}|q\in\supp{M}\} = \height{p}.\qedhere\]
\end{proof}

\begin{theorem} \label{moreimprovedjorgensenquestiontheorem} Let $R$ be a Noetherian ring and let $M$ be a nonzero finitely generated module with $\pdim_{R}M<\infty$. Assume either $R$ is local with equidimensional completion or $R$ is CM. If $\grade_{R}M\leq 2$, then the following hold
\begin{enumerate}
\item $\ext_{R}^{n}(M,M)\neq 0$ for $0\leq n\leq \grade_{R}M$.
\item $\tor_{n}^{R}(M,M)\neq 0$ for $0\leq n\leq \grade_{R}M$.
\end{enumerate}
\end{theorem}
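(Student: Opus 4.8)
The plan is to reduce both cases to Lemma~\ref{lemmaforjorgensensquestiontheorem}, i.e.\ to a Noetherian local ring of dimension at most $2$ over which $M$ has finite length and positive grade. The case $\grade_R M=0$ is immediate: $\Hom_R(M,M)\neq 0$ since it contains the identity, and $M\otimes_R M\neq 0$ by Nakayama at a maximal ideal in $\supp M$. So assume $g:=\grade_R M\in\{1,2\}$. Since $\ext^n_R(M,M)_p\cong\ext^n_{R_p}(M_p,M_p)$ and likewise for $\tor$, it suffices to exhibit a prime $p$ that is minimal in $\supp M$ and has $\height p\le 2$. Indeed, $R_p$ is then local of dimension $\le 2$, $M_p$ is nonzero of finite length with $\pdim_{R_p}M_p<\infty$, and $\grade_{R_p}M_p=\depth R_p\ge g>0$ (the equality because $\ann_{R_p}M_p$ is $pR_p$-primary, the inequality from $p\in\supp M$ and the infimum formula of Proposition~\ref{gradeproposition}); moreover $\pdim_{R_p}M_p=\depth R_p$ by Auslander--Buchsbaum since $\depth_{R_p}M_p=0$. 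Applying Lemma~\ref{lemmaforjorgensensquestiontheorem} over $R_p$ with $N=M_p$ then gives $\ext^n_{R_p}(M_p,M_p)\neq 0$ and $\tor^{R_p}_n(M_p,M_p)\neq 0$ for $0\le n\le\pdim_{R_p}M_p=\depth R_p\ge g$, and localization returns the conclusions over $R$.

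If $R$ is Cohen--Macaulay, choose a minimal prime $p$ of $\supp M$ with $\dim R/p=\dim M$; by Lemma~\ref{gradeisthesamelocallyatminimalprime}, $\grade_R M=\grade_{R_p}M_p=\depth R_p=\dim R_p=\height p$, so $\height p=g\le 2$ and this $p$ works.

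If $R$ is local with equidimensional completion, first pass to $\widehat R$: completion preserves grade, projective dimension, and nonvanishing of $\ext$ and $\tor$ (flat base change), and $\widehat R$ is again local with equidimensional completion, so we may assume $R$ is complete and equidimensional, hence catenary and equidimensional, so that $\height\mathfrak q+\dim R/\mathfrak q=\dim R$ for every prime $\mathfrak q$. Pick $q\in\supp M$ attaining the infimum $\grade_R M=\inf\{\depth R_{\mathfrak q}\mid\mathfrak q\in\supp M\}$ (attained, e.g.\ at an associated prime of $R$ modulo a maximal $R$-regular sequence in $\ann_R M$), so $\depth R_q=g\le 2$. By Auslander--Buchsbaum $\pdim_{R_q}M_q\le\depth R_q=g\le 2$, and by the New Intersection Theorem over $R_q$ one gets $\dim R_q-\dim M_q\le\pdim_{R_q}M_q\le 2$. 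Now choose a minimal prime $p$ of $\supp M$ with $p\subseteq q$ and $\dim R_q/pR_q=\dim M_q$; since $R_q$ is catenary, $\height_R p=\height_{R_q}(pR_q)\le\dim R_q-\dim M_q\le 2$, which is the prime required above.

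The local-to-global reductions and the completion step are routine. The one genuine input is the New Intersection Theorem (equivalently $\height\ann_R M\le\pdim_R M$) in the equidimensional case: without Cohen--Macaulayness one cannot simply equate grade with height, and it is precisely the Intersection Theorem together with catenariness of the complete ring that forces $\supp M$ to contain a minimal prime of height at most $2$. I expect arranging this descent to the right minimal prime — choosing $q$ attaining the grade, then $p\subseteq q$ of controlled height — to be the step requiring the most care.
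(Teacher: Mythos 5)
Your proof is correct, and it follows the same overall strategy as the paper---localize at a minimal prime $p$ of $\supp M$ of height at most $2$, then apply Lemma~\ref{lemmaforjorgensensquestiontheorem} over $R_p$---but the way you produce that prime in the equidimensional case is genuinely different. The paper takes $p$ minimal in $\supp M$ with $\dim R/p = \dim M$ and runs the chain
\[
\grade_{R_p}M_p \le \height p \le \dim R - \dim M = \grade_R M \le \grade_{R_p}M_p,
\]
where the middle equality is Beder's theorem $\grade_R M = \dim R - \dim M$ for complete equidimensional rings of finite projective dimension; this is what forces the equidimensionality hypothesis. You instead first locate a prime $q\in\supp M$ with $\depth R_q = g$ (your parenthetical justification that the infimum is attained is correct and worth keeping), then bound $\pdim_{R_q}M_q \le g$ by Auslander--Buchsbaum and drop to a minimal $p\subseteq q$, invoking the New Intersection Theorem. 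As written you invoke NIT in the form $\dim R_q - \dim M_q \le \pdim_{R_q}M_q$, which does require $R_q$ to be catenary and equidimensional, so your proof stays within the paper's hypothesis. But it is worth noting that your descent can be streamlined to avoid equidimensionality altogether: having chosen $p\subseteq q$ minimal in $\supp M$, simply apply NIT over $R_p$ itself to get
\[
\height p = \dim R_p \le \pdim_{R_p}M_p \le \pdim_{R_q}M_q \le \depth R_q = g \le 2,
\]
which uses only $\length(M_p)<\infty$, $\pdim_{R_p}M_p<\infty$, and Auslander--Buchsbaum, with no catenariness or equidimensionality anywhere. This shows your approach actually proves the statement for an arbitrary Noetherian local ring, making the ``equidimensional completion'' hypothesis of the theorem superfluous; the paper's reliance on Beder's theorem is what creates the apparent need for it. Your handling of the $g=0$ case and the Cohen--Macaulay case match the paper's.
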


\begin{proof}
First assume that $R$ is local with equidimensional completion. Since $\widehat{R}$ is a faithfully flat extension of $R$, we can assume $R$ is complete. Let $p$ be a minimal prime of $M$ so that $\dim{R/p}=\dim{M}$. Notice that
\begin{align*}
\grade_{R_{p}}M_{p} \leq \height{\ann_{R_{p}}M_{p}} \leq \dim{R_{p}}  = \height{p} &\leq \dim{R} - \dim{R/p} \\
&= \dim{R} - \dim{M} \\
&=\grade_{R}M.
\end{align*}
The last equality is by \cite[Theorem 3.6]{beder} since $R$ is equidimensional and $\pdim_{R}M<\infty$. Since $\grade_{R}M\leq\grade_{R_{p}}M_{p}$ holds in general, this shows $\grade_{R}M=\grade_{R_{p}}M_{p}$. Therefore
\[\dim{R_{p}} = \pdim_{R_{p}}M_{p} = \grade_{R_{p}}M_{p} = \grade_{R}M \leq 2,\]
and so the theorem follows from Lemma \ref{lemmaforjorgensensquestiontheorem}.\newline

Now assume $R$ is CM. By Lemma \ref{gradeisthesamelocallyatminimalprime} we have $\grade_{R}{M}=\grade_{R_{p}}M_{p}$ where $p$ is a minimal prime of $M$ with $\dim{R/p}=\dim{M}$, and so the result follows from the local case by localizing at $p$. 
\end{proof}

\begin{remark} Recall that given a homomorphism $f:Q\rightarrow R$ of rings, an $R$ module $M$ \textit{lifts} to a $Q$ modules $L$ if $L\otimes_{Q}R\cong M$ and $\tor_{n}^{Q}(L,R)=0$ for $n>0.$ Modules over regular local rings, modules of finite projective dimension over local unramified hypersurfaces, and modules that lift to a regular local ring are Tor rigid, and known cases of Jorgensen's question use this fact. In fact, in \cite[Theorem 2.1]{jorgensen}, Jorgensen showed that if $M$ is a finitely generated Tor rigid module of finite projective dimension over a Noetherian local ring $R$, then $\ext_{R}^{n}(M,M)\neq 0$ for $0\leq n\leq\pdim_{R}M$. In general modules of finite projective dimension are not Tor rigid; Heitmann gave the first example in \cite{heitmann}.
\end{remark}

\begin{corollary} Let $R$ be a CM ring, let $M$ be a nonzero finitely generated module with $\pdim_{R}M<\infty$, and assume there exists a nonzerodivisor $x\in\ann_{R}M$ so that $\ext_{R/x}^{i}(M,M)=0$ for some $i>\dim{R}$. If $\grade_{R}M=3$, then $\ext_{R}^{n}(M,M)\neq 0$ for $0\leq n\leq \grade_{R}M$.
\end{corollary}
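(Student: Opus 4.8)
The plan is to pass from $R$ to $\bar R:=R/x$, show $\pdim_{\bar R}M<\infty$, invoke Theorem~\ref{moreimprovedjorgensenquestiontheorem} over $\bar R$, and then transfer the nonvanishing back to $R$ by a change-of-rings spectral sequence.

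Exactly as in the Cohen--Macaulay case of the proof of Theorem~\ref{moreimprovedjorgensenquestiontheorem}, I would first localize at a minimal prime $p$ of $M$ with $\dim R/p=\dim M$ (Lemma~\ref{gradeisthesamelocallyatminimalprime}) to reduce to the case that $R$ is Cohen--Macaulay local with $\dim R=\pdim_R M=\grade_R M=3$ and $M$ has finite length; the hypothesis on $\ext_{R/x}(M,M)$ survives localization (Ext commutes with localization for finitely generated modules, and $x$ remains a nonzerodivisor of $R_p$ lying in $\ann M$), so $\bar R$ is Cohen--Macaulay local of dimension $2$ and $\ext_{\bar R}^{\,i}(M,M)=0$ for some $i\ge 4$. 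Since $\pdim_R M<\infty$ and $\bar R=R/(x)$ with $x$ an $R$-regular element, $M$ has finite complete intersection dimension over $\bar R$, and because the deformation $R\twoheadrightarrow\bar R$ has codimension one, $\cx_{\bar R}M\le 1$; thus the minimal $\bar R$-free resolution of $M$ is eventually periodic of period at most two, and its periodic tail lifts to a matrix factorization $(\Phi,\Psi)$ of $x$ over $R$ with $\coker\Phi$ and $\coker\Psi$ of projective dimension at most one over $R$.

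The core of the argument is to prove $\pdim_{\bar R}M<\infty$. Consider the change-of-rings spectral sequence $\ext_{\bar R}^{\,p}\!\big(M,\ext_R^{\,q}(\bar R,M)\big)\Rightarrow\ext_R^{\,p+q}(M,M)$. As $x$ is $R$-regular and $xM=0$, one has $\ext_R^{\,q}(\bar R,M)\cong M$ for $q\in\{0,1\}$ and $0$ otherwise, so the spectral sequence has two rows, $E_2^{p,0}=E_2^{p,1}=\ext_{\bar R}^{\,p}(M,M)$, and its only differentials $d_2^{(j)}\colon\ext_{\bar R}^{\,j}(M,M)\to\ext_{\bar R}^{\,j+2}(M,M)$ are Yoneda multiplication by a fixed class $\theta\in\ext_{\bar R}^{\,2}(M,M)$ (the Eisenbud operator of $x$). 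Since $\pdim_R M=3$ gives $\ext_R^{\,n}(M,M)=0$ for $n\ge 4$, the associated long exact sequence shows $d_2^{(j)}$ is an isomorphism for every $j\ge 3$, so $\ext_{\bar R}^{\,j}(M,M)\cong\ext_{\bar R}^{\,j+2}(M,M)$ for $j\ge 3$; combined with $\ext_{\bar R}^{\,i}(M,M)=0$ this forces $\ext_{\bar R}^{\,j}(M,M)=0$ for all $j\ge 3$ with $j\equiv i\pmod 2$. If $i$ is even this gives $\theta^2\in\ext_{\bar R}^{\,4}(M,M)=0$, and since $d_2^{(j)}=\theta\cdot(-)$ is an isomorphism for $j\ge 3$ we get $\ext_{\bar R}^{\,7}(M,M)=\theta^2\!\cdot\ext_{\bar R}^{\,3}(M,M)=0$, hence $\ext_{\bar R}^{\,j}(M,M)=0$ for all $j\ge 3$; as $M$ has finite complete intersection dimension over $\bar R$, vanishing of $\ext_{\bar R}^{\,j}(M,M)$ for $j\gg0$ forces $\cx_{\bar R}M=0$, i.e.\ $\pdim_{\bar R}M\le 2$. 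The delicate point, which I expect to be the main obstacle, is the opposite parity: for a general module of finite complete intersection dimension and complexity one, self-Ext can vanish in one parity only (take $M=\bar R/(u)$ over $\bar R=k[[u,v]]/(uv)$), so here one must use that $M$ has finite projective dimension over the deformation $R$ — concretely, that $\coker\Phi$ and $\coker\Psi$ are maximal Cohen--Macaulay over $\bar R$ of projective dimension one over $R$ — to force the periodic part of $\ext_{\bar R}^{\,\bullet}(M,M)$ to vanish in both parities.

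Once $\pdim_{\bar R}M<\infty$ is known, $M$ is a nonzero finite-length module of finite projective dimension over the Cohen--Macaulay local ring $\bar R$, so $\pdim_{\bar R}M=\grade_{\bar R}M=2$, and Theorem~\ref{moreimprovedjorgensenquestiontheorem} gives $\ext_{\bar R}^{\,n}(M,M)\neq 0$ for $0\le n\le 2$. Feeding this into the two-row spectral sequence above: $\ext_R^{\,0}(M,M)\cong\ext_{\bar R}^{\,0}(M,M)\neq 0$; from $0\to\ext_{\bar R}^{\,1}(M,M)\to\ext_R^{\,1}(M,M)\to\ker d_2^{(0)}\to 0$ we get $\ext_R^{\,1}(M,M)\neq 0$; since $\ext_{\bar R}^{\,3}(M,M)=0$, the exact sequence $0\to\coker d_2^{(0)}\to\ext_R^{\,2}(M,M)\to\ker\bigl(d_2^{(1)}\colon\ext_{\bar R}^{\,1}(M,M)\to\ext_{\bar R}^{\,3}(M,M)\bigr)\to 0$ yields a surjection $\ext_R^{\,2}(M,M)\twoheadrightarrow\ext_{\bar R}^{\,1}(M,M)\neq 0$; and $\ext_R^{\,3}(M,M)\neq 0$ because $3=\pdim_R M$. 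Hence $\ext_R^{\,n}(M,M)\neq 0$ for $0\le n\le 3=\grade_R M$.
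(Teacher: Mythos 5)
Your overall strategy matches the paper's: localize to reduce to $R$ local CM of dimension $3$ with $M$ of finite length, show $\pdim_{R/x}M<\infty$, invoke Theorem~\ref{moreimprovedjorgensenquestiontheorem} over $R/x$ to obtain $\ext_{R/x}^n(M,M)\neq 0$ for $n=0,1,2$, and transfer this back to $R$ via the change-of-rings long exact sequence. Your reformulation of that exact sequence as a two-row spectral sequence whose $d_2$ is multiplication by the Eisenbud operator $\theta$ is correct, as is the final transfer step, and so is your observation that $\pdim_R M<\infty$ bounds the $R/x$-Betti numbers of $M$.

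The genuine gap is precisely where you flag it. You correctly show $\theta\colon\ext_{R/x}^j(M,M)\to\ext_{R/x}^{j+2}(M,M)$ is an isomorphism for $j\geq 3$, and in the even-$i$ case you close the argument cleanly via $\theta^2=0$. But in the odd-$i$ case the argument produces vanishing only in one parity, and your own example $\bar R/(u)$ over $k[[u,v]]/(uv)$ shows that such one-sided vanishing does not by itself force finite projective dimension for a complexity-one module of finite CI-dimension. You gesture at using the matrix factorization $(\Phi,\Psi)$ of the periodic tail to rule out the bad parity, but you never carry this out, so the proposal as written does not establish $\pdim_{R/x}M<\infty$ in general. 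The paper does not attempt this direct attack: it deduces that $M$ has finite CI-dimension over $R/x$ from \cite[Proposition 1.12.3]{avramovgasharovpeeva} and then invokes \cite[Propositions 2.5 and 4.16]{celikbasdao}, which combine the bounded Betti numbers, finite CI-dimension, and the single vanishing $\ext_{R/x}^i(M,M)=0$ with $i>\dim R$ to give $\pdim_{R/x}M<\infty$ regardless of the parity of $i$. That Ext-rigidity input for modules of finite complete intersection dimension is exactly what your argument is missing.
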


\begin{proof}
By localizing at a minimal prime $p$ of $M$ with $\dim{R/p}=\dim{M}$, we can assume $R$ is local and $M$ has finite length; just note that $\grade_{R}M=\grade_{R_{p}}M_{p}$ by Lemma \ref{gradeisthesamelocallyatminimalprime}. Let $k$ be the residue field of $R$. Then the fact that $\pdim_{R}M<\infty$ and the following exact sequence implies that the Betti numbers of $M$ over $R/x$ are bounded
\[\begin{tikzcd}
	0 && {\ext_{R/x}^{0}(M,k)} && {\ext_{R}^{0}(M,k)} && {0 } \\
	&& {\ext_{R/x}^{1}(M,k)} && {\ext_{R}^{1}(M,k)} && {\ext_{R/x}^{0}(M,k)} \\
	&& {\ext_{R/x}^{2}(M,k)} && {\ext_{R}^{2}(M,k)} && {\ext_{R/x}^{1}(M,k)} && \cdots
	\arrow[from=1-1, to=1-3]
	\arrow[from=1-3, to=1-5]
	\arrow[from=1-5, to=1-7]
	\arrow[from=1-7, to=2-3]
	\arrow[from=2-3, to=2-5]
	\arrow[from=2-5, to=2-7]
	\arrow[from=2-7, to=3-3]
	\arrow[from=3-3, to=3-5]
	\arrow[from=3-5, to=3-7]
	\arrow[from=3-7, to=3-9]
\end{tikzcd}\]
(see \cite[Theorem 10.75]{rotman} for this long exact sequence). Since $M$ having finite complete intersection dimension over $R$ implies $M$ has finite complete intersection dimension over $R/x$ by \cite[ Proposition 1.12.3]{avramovgasharovpeeva}, we have $\pdim_{R/x}M<\infty$ by \cite[Proposition 2.5 and Proposition 4.16]{celikbasdao}. Since $\grade_{R/x}M=2$, Theorem \ref{moreimprovedjorgensenquestiontheorem} now gives $\ext_{R/x}^{n}(M,M)\neq 0$ for $n=0,1,2$. The corollary now follows from using the following exact sequence
\[\begin{tikzcd}
	0 && {\ext_{R/x}^{0}(M,M)} && {\ext_{R}^{0}(M,M)} && {0 } \\
	&& {\ext_{R/x}^{1}(M,M)} && {\ext_{R}^{1}(M,M)} && {\ext_{R/x}^{0}(M,M)} \\
	&& {\ext_{R/x}^{2}(M,M)} && {\ext_{R}^{2}(M,M)} && {\ext_{R/x}^{1}(M,M)} \\
	&& 0 && {\ext_{R}^{3}(M,M)} && {\ext_{R/x}^{2}(M,M)} \\
	&& 0
	\arrow[from=1-1, to=1-3]
	\arrow[from=1-3, to=1-5]
	\arrow[from=1-5, to=1-7]
	\arrow[from=1-7, to=2-3]
	\arrow[from=2-3, to=2-5]
	\arrow[from=2-5, to=2-7]
	\arrow[from=2-7, to=3-3]
	\arrow[from=3-3, to=3-5]
	\arrow[from=3-5, to=3-7]
	\arrow[from=3-7, to=4-3]
	\arrow[from=4-3, to=4-5]
	\arrow[from=4-5, to=4-7]
	\arrow[from=4-7, to=5-3]
\end{tikzcd}\]
(see \cite[Theorem 10.75]{rotman} for this long exact sequence).
\end{proof}

\section{On a Formula of Chan}
The goal of this section is to prove Theorem \ref{gradedjeanchan}. Before doing this, we need to set up some notation. The notation we use here is from \cite[Section 6.3]{roberts}. Let $R$ be a Noetherian $\mathbb{Z}$ graded ring with $R_{0}$ Artinian and let $M$ be a finitely generated graded module. For an integer $k$, the \textit{shift} of $M$ by $k$, denoted by $M[k]$, is the graded module where $M[k]_{n}=M_{n+k}$. The \textit{Hilbert polynomial} of $M$, denoted by $P_{M}$, is the polynomial where $P_{M}(n)=\length_{R_{0}}(\oplus_{k\leq n}M_{k})$ for $n\gg 0$. The degree of this polynomial is the dimension of $M$. The $ith$ derivative of $P_{M}$ is denoted by $P_{M}^{(i)}$.\newline

Let $F$ be a graded free resolution of $M$ with each $F_{i}$ finitely generated and $F_{i}=\oplus R[n_{ij}]$. Define $c_{k}$ to be
\[c_{k}=\frac{1}{k!} \sum_{i,j} (-1)^{i}n_{ij}^{k}.\]
The first part of our proof of the following theorem closely follows \cite[Theorem 6.3.1]{roberts}.
\begin{theorem} \label{gradedjeanchan} Let $R$ be a Noetherian $\mathbb{Z}$ graded ring with $R_{0}$ Artinian, and let $M$ and $N$ be finitely generated graded modules with $\length(M\otimes_{R}N)<\infty$ and $\pdim_{R}M<\infty$. Then
\[\chi^{R}(M,N)=(-1)^{\grade_{R}M}\xi^{R}(M,N).\]
\end{theorem}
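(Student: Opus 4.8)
The plan is to push both $\chi^{R}(M,N)$ and $\xi^{R}(M,N)$ through the Hilbert polynomial of $N$ and the integers $c_{k}$ attached to a graded free resolution of $M$: for the Tor side this is the computation behind \cite[Theorem 6.3.1]{roberts}, and for the Ext side one runs the dual computation, with $\Hom_{R}(-,N)$ in place of $-\otimes_{R}N$. Once both are in this shape, the point is that only one of the terms $c_{k}P_{N}^{(k)}$ can actually survive in each sum, and the two surviving terms differ by a sign that turns out to be $(-1)^{\grade_{R}M}$.

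First I would fix a finite graded free resolution $F_{\bullet}\to M$ with $F_{i}=\bigoplus_{j}R[n_{ij}]$, which exists since $\pdim_{R}M<\infty$. Because $\supp\tor_{i}^{R}(M,N)$ and $\supp\ext_{R}^{i}(M,N)$ are both contained in $\supp M\cap\supp N=\supp(M\otimes_{R}N)$, which is zero-dimensional, every $\tor_{i}^{R}(M,N)$ and $\ext_{R}^{i}(M,N)$ has finite length, so $\chi^{R}(M,N)$ and $\xi^{R}(M,N)$ are defined. Now $\tor_{i}^{R}(M,N)=H_{i}(F_{\bullet}\otimes_{R}N)$ with $F_{i}\otimes_{R}N=\bigoplus_{j}N[n_{ij}]$; taking the (finite-length) $R_{0}$-Euler characteristic of the complex $F_{\bullet}\otimes_{R}N$ in each degree, equating it with the Euler characteristic of the homology in that degree, and then summing over all degrees $\le n$, one gets $\chi^{R}(M,N)=\sum_{i,j}(-1)^{i}P_{N}(n+n_{ij})$ for $n\gg 0$ --- here one uses that the partial length sums of a finitely generated graded module agree with its Hilbert polynomial for $n\gg 0$, and that those of the finite-length modules $\tor_{i}^{R}(M,N)$ stabilize to $\length(\tor_{i}^{R}(M,N))$. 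Taylor expanding $P_{N}(n+n_{ij})$ about $n$ turns this into $\chi^{R}(M,N)=\sum_{k}c_{k}P_{N}^{(k)}(n)$ for $n\gg 0$. The dual computation uses $\ext_{R}^{i}(M,N)=H^{i}(\Hom_{R}(F_{\bullet},N))$ and $\Hom_{R}(F_{i},N)=\bigoplus_{j}N[-n_{ij}]$, and gives $\xi^{R}(M,N)=\sum_{i,j}(-1)^{i}P_{N}(n-n_{ij})=\sum_{k}(-1)^{k}c_{k}P_{N}^{(k)}(n)$ for $n\gg 0$.

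Next I would isolate the surviving term. Write $c:=\dim R-\dim M$. Applying the same length-counting identity directly to the resolution $F_{\bullet}\to M$ itself (where no finiteness hypothesis is needed) gives $P_{M}(n)=\sum_{i,j}(-1)^{i}P_{R}(n+n_{ij})=\sum_{k}c_{k}P_{R}^{(k)}(n)$ for $n\gg 0$; since $\deg P_{M}=\dim M$, $\deg P_{R}^{(k)}=\dim R-k$, and the least $k$ with $c_{k}\neq 0$ governs the degree of the right-hand side, we get $c_{k}=0$ for $k<c$ and $c_{c}\neq 0$. Together with $P_{N}^{(k)}=0$ for $k>\dim N$, the sum $\sum_{k}c_{k}P_{N}^{(k)}(n)$ runs only over $c\le k\le\dim N$; as its value $\chi^{R}(M,N)$ is independent of $n$ while $c_{c}P_{N}^{(c)}$ would be the unique contribution of top degree $\dim N-c$, we must have $\dim N\le c$. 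If $\dim N<c$ the range is empty and $\chi^{R}(M,N)=\xi^{R}(M,N)=0$; if $\dim N=c$ only the term $k=c$ survives in both expressions, so $\chi^{R}(M,N)=c_{c}P_{N}^{(c)}(n)$ and $\xi^{R}(M,N)=(-1)^{c}c_{c}P_{N}^{(c)}(n)$, giving $\chi^{R}(M,N)=(-1)^{c}\xi^{R}(M,N)$ in all cases. Finally I would identify $c=\dim R-\dim M$ with $\grade_{R}M$: this is the grade--codimension equality for modules of finite projective dimension (cf.\ \cite[Theorem 3.6]{beder}), applied after localizing at a graded maximal ideal in $\supp M\cap\supp N$ and, if necessary, reducing to the equidimensional case as is done elsewhere in the paper.

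The step I expect to be the main obstacle is not any single deep theorem but the combination of two bookkeeping points: making the Hilbert-polynomial formalism rigorous in the possibly non-standard $\mathbb{Z}$-graded setting with $R_{0}$ merely Artinian --- i.e.\ that the relevant partial length sums are eventually polynomial and that the degreewise Euler-characteristic identities may be summed termwise --- and the degree comparison that forces all but one term of $\sum_{k}c_{k}P_{N}^{(k)}$ to drop out, together with the clean identification of the surviving index with $\grade_{R}M$ rather than merely with $\dim R-\dim M$.
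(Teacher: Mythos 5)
Your computational core matches the paper's proof exactly: both push $\chi^{R}(M,N)$ and $\xi^{R}(M,N)$ through the coefficients $c_{k}$ of a finite graded free resolution and the Hilbert polynomial of $N$, arriving at $\chi^{R}(M,N)=\sum_{k}c_{k}P_{N}^{(k)}$ and $\xi^{R}(M,N)=\sum_{k}(-1)^{k}c_{k}P_{N}^{(k)}$, and both observe that only the $k=\dim R-\dim M$ term can survive. Your derivation of $\dim M+\dim N\le\dim R$ from the fact that the alternating length sum is eventually constant in $n$ is a clean alternative to the paper's citation of \cite[Theorem 6.3.2]{roberts}, and is fine.

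The genuine gap is the step you yourself single out at the end: identifying $\dim R-\dim M$ with $\grade_{R}M$. Your proposed route --- localize at a graded maximal ideal of $\supp M\cap\supp N$ and apply \cite[Theorem 3.6]{beder} --- does not obviously work. Grade does not commute with localization in general (one only has $\grade_{R_{p}}M_{p}\ge\grade_{R}M$, with strict inequality possible), so establishing the grade--codimension equality at a single prime does not hand you the global statement, and the same caution applies to $\dim R-\dim M$ versus $\dim R_{p}-\dim M_{p}$. Beder's theorem moreover requires an equidimensional (completion of a) local ring, and your "reducing to the equidimensional case as is done elsewhere in the paper" is not available here: the paper's reductions of that kind either assume $R$ is CM (Lemma~\ref{gradeisthesamelocallyatminimalprime}) or that the completion is equidimensional (Theorem~\ref{moreimprovedjorgensenquestiontheorem}), and Theorem~\ref{gradedjeanchan} assumes neither. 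The paper sidesteps all of this by invoking \cite[Theorem 6.3.4]{roberts}, which proves $\grade_{R}M=\dim R-\dim M$ directly for finitely generated graded modules of finite projective dimension over a Noetherian $\mathbb{Z}$-graded ring with $R_{0}$ Artinian --- that citation is the clean way to finish, and it is the one piece your proposal is missing.
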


\begin{proof}
Let $F$ be a graded free resolution of $M$ with each $F_{i}$ finitely generated and $F_{i}=\oplus R[n_{ij}]$ and let $k_{0}$ be the smallest integer $k$ so that $c_{k}\neq 0$. Then $k_{0}=\dim{R}-\dim{M}$ by the discussion after \cite[Theorem 6.3.1]{roberts}, and the discussion after the proof of \cite[Theorem 6.3.1]{roberts} gives the second equality below
\[\chi^{R}(M,N) = \sum_{i}(-1)^{i} P_{\tor_{i}^{R}(M,N)} = \sum_{k} c_{k}P_{N}^{(k)} = \sum_{k\geq k_{0}} c_{k}P_{N}^{(k)}.\]
Also, we have $\dim{M}+\dim{N}\leq \dim{R}$ by \cite[Theorem 6.3.2]{roberts}, and so $P_{N}^{(k)}=0$ for $k>k_{0}$. This implies $\sum_{k\geq k_{0}} c_{k}P_{N}^{(k)}=c_{k_{0}}P_{N}^{(k_{0})}$, and so
\begin{equation} \label{chiequalityforgradedstuff}
\chi^{R}(M,N) = c_{k_{0}}P_{N}^{(k_{0})}    
\end{equation}

Since 
\[\Hom_{R}(R[n],N)\cong N[-n],\]
$\Hom_{R}(F,N)$ is a complex where $\Hom_{R}(F,N)_{-i}\cong \oplus N[-n_{ij}]$. This gives the third equality below
\begin{align*}
\sum_{i} (-1)^{i} P_{\ext_{R}^{i}(M,N)}(n) = \sum_{i} (-1)^{i} P_{H_{-i}(\Hom_{R}(F,N))}(n) &= \sum_{i} (-1)^{i} P_{\Hom_{R}(F,N)_{-i}}(n) \\
&= \sum_{i,j} (-1)^{i} P_{N}(n-n_{ij}) \\
&= \sum_{i,j,k} (-1)^{i} P_{N}^{(k)}(n)\frac{(-n_{ij})^{k}}{k!} \\
&= \sum_{k} \frac{P_{N}^{(k)}(n)}{k!}(\sum_{i,j} (-1)^{i+k} n_{ij}^{k}) \\
&= \sum_{k} \frac{P_{N}^{(k)}(n)}{k!}(\sum_{i,j} (-1)^{i}(-1)^{k} n_{ij}^{k}) \\
&= \sum_{k} (-1)^{k}\frac{P_{N}^{(k)}(n)}{k!}(\sum_{i,j} (-1)^{i} n_{ij}^{k}) \\
&= \sum_{k } (-1)^{k} c_{k} P_{N}^{(k)}(n) \\
&= (-1)^{k_{0}} c_{k_{0}} P_{N}^{(k_{0})}(n).
\end{align*}
The additivity of Hilbert polynomials with respect to exact sequences gives the second equality and the discussion before \cite[Theorem 6.3.1]{roberts} gives the fourth equality. Therefore
\[(-1)^{k_{0}} \sum_{i} (-1)^{i} P_{\ext_{R}^{i}(M,N)}(n) = c_{k_{0}} P_{N}^{(k_{0})}(n)\]
for all $n$, giving the second equality below
\[(-1)^{k_{0}} \xi^{R}(M,N) = (-1)^{k_{0}} \sum_{i} (-1)^{i} P_{\ext_{R}^{i}(M,N)} = c_{k_{0}} P_{N}^{(k_{0})}.\]
Since $\grade_{R}M=\dim{R}-\dim{M}$ by \cite[Theorem 6.3.4]{roberts}, this and (\ref{chiequalityforgradedstuff}) completes the proof.
\end{proof}

\begin{remark} The formula in Theorem \ref{gradedjeanchan} was first proved in \cite[Theorem 5]{jeanchan} for local complete intersections when both modules have finite projective dimension. A similar graded result was proved in \cite[Corollary 3.11]{mori} when $R$ is positively graded, $R_{0}$ is a field, and without the finite projective dimension assumption.
\end{remark}

The following is the graded version of Theorem \ref{moreimprovedjorgensenquestiontheorem}. 
\begin{corollary} \label{gradedversion} Let $R$ be a Noetherian $\mathbb{Z}$ graded ring with $R_{0}$ Artinian and $\dim{R}\leq 2$, and let $M$ and $N$ be nonzero finite length graded modules with $\pdim_{R}M<\infty$. If $R$ has a unique graded maximal ideal, then $\ext_{R}^{n}(M,N)\neq 0$ for $0\leq n\leq \pdim_{R}M$.
\end{corollary}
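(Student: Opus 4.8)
The plan is to imitate the proof of Lemma~\ref{lemmaforjorgensensquestiontheorem} (hence of Theorem~\ref{moreimprovedjorgensenquestiontheorem}) in the graded setting, with Theorem~\ref{gradedjeanchan} playing the role of Chan's formula. A Noetherian $\mathbb{Z}$-graded ring with a unique graded maximal ideal $\mathfrak{m}$ is $*$local, so graded Auslander--Buchsbaum, minimal graded free resolutions, and graded Nakayama are all available, and these are the only ``local'' tools the argument uses.

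First I would record the reductions. Since $M$ has finite length, $\depth M = 0$ and $\supp M = \{\mathfrak{m}\}$, so graded Auslander--Buchsbaum gives $\pdim_R M = \depth R \le \dim R \le 2$. Two facts hold for any value of $\pdim_R M$: (i) $\Hom_R(M,N)\neq 0$, since $M$ surjects onto a shifted copy of $R/\mathfrak{m}$ while $N$ contains one in its socle, and a suitable composite is a nonzero homogeneous map of some degree; and (ii) $\ext_R^{\pdim_R M}(M,N)\neq 0$, obtained by applying $\Hom_R(-,N)$ to a minimal graded free resolution of $M$ and invoking graded Nakayama. If $\pdim_R M \le 1$, then (i) and (ii) already give the corollary, as there is no intermediate Ext module; so I may assume $\dim R = 2$ and $\pdim_R M = 2$, and then $\ext_R^i(M,N)=0$ for $i\ge 3$.

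It remains to treat this case, and here the Euler form does the work. I claim $\xi^R(M,N)=0$: the computation in the proof of Theorem~\ref{gradedjeanchan} shows $\chi^R(M,N)=c_{k_0}P_N^{(k_0)}$ with $k_0=\dim R-\dim M=2$, and since $\deg P_N=\dim N=0<2$ we get $P_N^{(k_0)}=0$, so $\chi^R(M,N)=0$; Theorem~\ref{gradedjeanchan} then gives $\xi^R(M,N)=(-1)^{\grade_R M}\chi^R(M,N)=0$. (One could instead re-run the proof of Lemma~\ref{xizero} verbatim, since all the relevant functors are additive on graded short exact sequences.) Consequently
\[
0=\xi^R(M,N)=\length(\Hom_R(M,N))-\length(\ext_R^1(M,N))+\length(\ext_R^2(M,N)),
\]
so $\length(\ext_R^1(M,N))=\length(\Hom_R(M,N))+\length(\ext_R^2(M,N))>0$ by (i), forcing $\ext_R^1(M,N)\neq 0$. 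Together with (i) and (ii), this yields $\ext_R^n(M,N)\neq 0$ for $0\le n\le \pdim_R M$.

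The proof is mostly assembly: the only steps requiring any thought are the vanishing $\xi^R(M,N)=0$ (immediate from the internals of Theorem~\ref{gradedjeanchan} via the degree bound on $P_N$) and the routine transfer of the minimal-resolution arguments (i)--(ii) to the $*$local setting, and neither presents a genuine obstacle.
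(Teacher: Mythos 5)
Your proof is correct and follows essentially the same strategy as the paper: reduce to $\pdim_R M = 2$ using that $\Hom$ and the top Ext are nonzero, show $\chi^R(M,N)=0$ (you extract this from the internals of Theorem~\ref{gradedjeanchan} via the degree of $P_N$, the paper cites the graded Serre vanishing of Roberts), convert to $\xi^R(M,N)=0$ via Theorem~\ref{gradedjeanchan} and the parity of $\grade_R M$, and read off $\ext^1\neq 0$ from the alternating sum of lengths. The only cosmetic differences are the use of graded Auslander--Buchsbaum instead of localization for the bound $\pdim_R M\leq\dim R$, and arguing directly rather than by contradiction at the last step.
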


\begin{proof}
Since
\[\pdim_{R}M = \sup\{\pdim_{R_{p}}M_{p}|p\in\supp{M}\} \leq \sup\{\dim{R_{p}}|p\in\spec{R}\} \leq \dim{R},\] 
we can assume $\pdim_{R}M=2$. This is because $\Hom_{R}(M,N)\neq 0$, and $\ext_{R}^{\pdim_{R}M}(M,N)\neq 0$ by the graded version of Nakayama's Lemma (see \cite[Exercise 1.5.24.a]{brunsherzog}). If $\ext_{R}^{1}(M,N)=0$, then we would have $\xi^{R}(M,N)>0$, giving
\[\chi^{R}(M,N) = (-1)^{\grade_{R}M}\xi^{R}(M,N) = \xi^{R}(M,N) >0.\]
The first equality is by Theorem \ref{gradedjeanchan}, and the second equality is because 
\[\grade_{R}M=\dim{R}-\dim{M}\]
by \cite[Theorem 6.3.4]{roberts}. But $\chi^{R}(M,N)=0$ by \cite[Theorem 6.3.2]{roberts}, giving a contradiction.
\end{proof}

\begin{example}
A ring that satisfies the assumptions in Corollary \ref{gradedversion} is the $\mathbb{Z}$ graded ring $R=k[x,y]/(x^{2})$ where $k$ is a field, the degree of $x$ is -1, the degree of $y$ is 1. Another example is a quotient of a standard graded polynomial ring over a field by a homogeneous ideal.
\end{example}

\section{The Euler Form and The Higher Herbrand Difference} \label{higherherbranddifferencesection}
The goal of this section is to give a new case for when the higher Herbrand difference is zero. Before doing this, we need to review some definitions. Let $(R,m,k)$ be a Noetherian local ring and let $M$ and $N$ be finitely generated modules. We write $\nu_{R}(M)$ for the minimal number of generators of $M$. The \textit{complexity} of the pair $(M,N)$, denoted by $\cx_{R}(M,N)$, is
\[\cx_{R}(M,N) = \inf\{b\in\mathbb{N} | \nu_{R}(\ext_{R}^{n}(M,N))\leq an^{b-1} \text{ for some real number }a\text{ and for all }n\gg 0\}.\]
The \textit{complexity} of $M$ is $\cx_{R}M=\cx_{R}(M,k)$ and the \textit{plexity} of $M$ is $\px_{R}M=\cx_{R}(k,M)$. Let
\[f_{tor}^{R}(M,N) = \inf\{s|\length(\tor_{i}^{R}(M,N))<\infty\text{ for all }i\geq s\},\]
and let
\[f_{ext}^{R}(M,N) = \inf\{s|\length(\ext_{R}^{i}(M,N))<\infty\text{ for all }i\geq s\}.\]
Let $e$ be an nonnegative integer. If $f_{tor}^{R}(M,N)<\infty$, we define $\eta_{e}^{R}(M,N)$ to be
\[\eta_{e}^{R}(M,N) = \lim_{n\to\infty} \frac{\sum_{i=f_{tor}^{R}(M,N)}^{n} (-1)^{i}\length(\tor_{i}^{R}(M,N))}{n^{e}}.\]
If $f_{ext}^{R}(M,N)<\infty$, we define the higher Herbrand difference $h_{e}^{R}(M,N)$ to be
\[h_{e}^{R}(M,N) = \lim_{n\to\infty} \frac{\sum_{i=f_{ext}^{R}(M,N)}^{n} (-1)^{i}\length(\ext_{R}^{i}(M,N))}{n^{e}}.\]
The invariant $\eta_{e}^{R}(M,N)$ was defined in \cite{dao3}, and it is a generalization of the Euler characteristic and Hochsters Theta Pairing (\cite{hochster}). It was used to study the asymptotic behavior of Tor. The invariant $h_{e}^{R}(M,N)$ was defined in \cite{celikbasdao}, and it is a generalization of the Euler Form and the Herbrand difference (\cite{buchweitz}). It was used to study the asymptotic behavior of Ext.\newline

The following is an Ext version of a theorem of Dao, and our proof mirrors his closely. In \cite[Theorem 6.3 and Corollary 8.3]{dao3} Dao showed that if $M$ and $N$ are finitely generated modules over a codimension $r$ local complete intersection $R$ with $f_{tor}^{R}(M,N)<\infty$ and $\eta_{r}^{R}(M,N)=0$ and if $n\geq 0$, then 
\[\tor_{n}^{R}(M,N)=\dots=\tor_{n+r-1}(M,N)=0\]
implies $\tor_{i}^{R}(M,N)=0$ for $i\geq n$. If $\length(M\otimes_{R}N)<\infty$ also holds, then 
\[\dim{M} + \dim{N} \leq \dim{R} + c - 1\]
implies $\eta_{c}^{R}(M,N)=0$, where $c=\max\{\cx_{R}M,\cx_{R}N\}$. 
\begin{theorem} \label{vanishingofextlemmausingherbranddifference} Let $R$ be a local complete intersection with infinite residue field, let $M$ and $N$ be finitely generated modules with $\length(M\otimes_{R}N)<\infty$, and let $c=\max\{\cx_{R}M,\cx_{R}N\}$. Assume
\[\dim{M} + \dim{N} \leq \dim{R} + c - 1.\]
Then $h_{c}^{R}(M,N)=0$. In particular, If
\[\ext_{R}^{n}(M,N)=\dots=\ext_{R}^{n+c-1}(M,N)=0\]
for some $n>\depth{R}-\depth{M}$, then $\ext_{R}^{i}(M,N)=0$ for all $i>\depth{R}-\depth{M}$.
\end{theorem}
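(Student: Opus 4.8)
The plan is to prove the two assertions in turn, paralleling Dao's proofs of \cite[Corollary 8.3]{dao3} (vanishing of $\eta_c$) and \cite[Theorem 6.3]{dao3} (rigidity of Tor).

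\emph{Vanishing of $h_c^R(M,N)$.} I would induct on $c=\max\{\cx_R M,\cx_R N\}$. If $c=0$ then $M$ and $N$ both have finite projective dimension, so the defining limit for $h_0^R(M,N)$ is the finite alternating sum $\sum_{i\ge 0}(-1)^i\length(\ext_R^i(M,N))=\xi^R(M,N)$, and since $\dim M+\dim N\le\dim R-1<\dim R$ this is $0$ by Proposition \ref{xizerosummary}(1). For the inductive step I would pass to the completion (faithfully flat, so neither the hypotheses nor $h_c$ change) and write $R=R'/(g)$, where $R'$ is a local complete intersection of codimension $c-1$ and $g\in\mathfrak m_{R'}^2$ is a nonzerodivisor; using that $k$ is infinite, I would pick the presentation generically so that the corresponding cohomology operator is a nonzerodivisor on $\ext_{R'}^{*}(M,k)$ and on $\ext_{R'}^{*}(N,k)$, which yields $\cx_{R'}M=\max\{\cx_R M-1,0\}$, $\cx_{R'}N=\max\{\cx_R N-1,0\}$, and hence $\max\{\cx_{R'}M,\cx_{R'}N\}=c-1$. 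As $\spec R\hookrightarrow\spec R'$ is a closed immersion one has $\supp_{R'}M=\supp_R M$, $\supp_{R'}N=\supp_R N$, and $M\otimes_{R'}N\cong M\otimes_R N$, so $\length(M\otimes_{R'}N)<\infty$; and since $\dim R'=\dim R+1$, the inequality becomes $\dim M+\dim N\le\dim R+c-1=\dim R'+(c-1)-1$. The heart of the argument is then a comparison
\[
h_c^R(M,N)=\pm\, h_{c-1}^{R'}(M,N),
\]
obtained by relating the asymptotics of $\ext_R^{*}(M,N)$ and $\ext_{R'}^{*}(M,N)$ through the long exact sequences attached to $R=R'/(g)$ (equivalently, via the Eisenbud operator), mirroring Dao's comparison of $\eta_c^R$ with $\eta_{c-1}^{R'}$. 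Granting it, the induction hypothesis over $R'$ completes this part.

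\emph{The rigidity consequence.} By the Ext-analog of \cite[Theorem 6.3]{dao3} over complete intersections---proved by mirroring Dao, in the form involving $c=\max\{\cx_R M,\cx_R N\}$ consecutive vanishings and using the hypothesis $h_c^R(M,N)=0$ together with $f_{ext}^R(M,N)=0$ (the latter because $\length(M\otimes_R N)<\infty$)---the vanishing $\ext_R^n(M,N)=\dots=\ext_R^{n+c-1}(M,N)=0$ with $n\ge 1$ forces $\ext_R^i(M,N)=0$ for all $i\ge n$, in particular for $i\gg 0$. I would then invoke the support-variety dictionary over complete intersections: $\ext_R^i(M,N)=0$ for $i\gg 0$ is equivalent to $V_R(M)\cap V_R(N)=\{0\}$, and this forces the Tate cohomology $\widehat{\ext}_R^i(M,N)$ to vanish for every $i\in\mathbb Z$. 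Since $R$ is Gorenstein, $\gdim_R M=\depth R-\depth M$ and $\widehat{\ext}_R^i(M,N)\cong\ext_R^i(M,N)$ for $i>\depth R-\depth M$; combining these gives $\ext_R^i(M,N)=0$ for all $i>\depth R-\depth M$.

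\emph{Where the difficulty lies.} I expect the delicate step to be the comparison $h_c^R(M,N)=\pm h_{c-1}^{R'}(M,N)$: one must justify that the limits in question exist (which rests on the eventual polynomial growth of $\length(\ext_R^i(M,N))$ over a complete intersection) and that passage to the codimension-$(c-1)$ deformation scales the leading asymptotics of the alternating Ext sums exactly as in Dao's Tor computation, keeping careful track of signs and of the genericity of $g$. The other inputs---Proposition \ref{xizerosummary}(1) for the base case, the Ext-rigidity statement, and the Avramov--Buchweitz description of asymptotic Ext vanishing via support varieties---are either already recorded in the literature or transcribe their Tor analogues directly.
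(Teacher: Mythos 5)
Your overall strategy is sound and runs parallel to the paper's, but the paper packages the key steps as citations rather than re-derivations, and you are missing one reduction the paper makes at the outset. Specifically, the paper first disposes of the case $c>\cx_R(M,N)$ by invoking \cite[Theorem~3.4.1 and Corollary~4.3]{celikbasdao} (when $c$ strictly exceeds the complexity of the pair, the numerator of the defining limit grows too slowly and $h_c^R(M,N)=0$ automatically), and only then assumes $c=\cx_R(M,N)$. Your induction does not address this case separately, and your one-step comparison $h_c^R(M,N)=\pm\,h_{c-1}^{R'}(M,N)$ is only assured to behave well when $c$ tracks $\cx_R(M,N)$; you would need either the same preliminary reduction or an argument that the comparison holds (and both sides exist) even when $c>\cx_R(M,N)$.

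The remaining differences are routes, not gaps. Where you would induct one codimension at a time, choosing a generic $g$ via the infinite residue field, the paper instead invokes \cite[Theorem~1.3]{jorgensen2} to produce the entire chain $R\cong Q/(x_1,\dots,x_c)$ in one stroke with $\cx_QM=\cx_QN=0$, and then applies the ``all at once'' formula $2^c h_c^R(M,N)=h_0^Q(M,N)=\xi^Q(M,N)$ from \cite[Theorem~3.4.3]{celikbasdao}; combined with Proposition~\ref{xizerosummary}(1) (your base case), this gives the vanishing in one step. What you label ``the delicate step'' --- establishing the one-step version of this comparison by mirroring Dao --- is exactly the content the paper outsources to that citation, so your proposal would still need to cite or reprove it. For the rigidity consequence, the paper again simply cites \cite[Theorem~4.2]{celikbasdao}; your alternative, passing to support varieties and Tate cohomology ($\ext_R^i(M,N)=0$ for $i\gg0$ forces $V_R(M)\cap V_R(N)=\{0\}$, hence $\widehat{\ext}_R^i(M,N)=0$ for all $i$, and $\widehat{\ext}_R^i\cong\ext_R^i$ for $i>\depth R-\depth M$ since $R$ is Gorenstein), is correct and arguably more transparent, but it re-derives a result that is already available and amounts to the same mathematics. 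In short: fix the $c>\cx_R(M,N)$ case and the argument is fine, but it is essentially the paper's proof with the citations unpacked into an induction.
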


\begin{proof}
If $c>\cx_{R}(M,N)$, then the result follows from \cite[Theorem 3.4.1 and Corollary 4.3]{celikbasdao}, and so we can assume $c=\cx_{R}(M,N)$. We can also assume $R$ is complete. Then by \cite[Theorem 1.3]{jorgensen2} there exists a complete intersection $Q$ and a $Q$ regular sequence $x_{1},\dots,x_{c}$ that satisfies the following properties
\begin{enumerate}
\item $R\cong Q/(x_{1},\dots,x_{c})$.
\item If $R_{j}=Q/(x_{1},\dots,x_{j})$, then $\cx_{R_{j}}M=\cx_{R}M-(c-j)$.
\item $\cx_{Q}M=0$ and $\cx_{Q}N=0$.
\end{enumerate}
Since $\cx_{Q}M=\cx_{Q}N=0$, we have $\pdim_{Q}M<\infty$ and $\pdim_{Q}N<\infty$. Also, our assumptions on dimension gives the following
\begin{align*}
\dim{M} + \dim{N} \leq \dim{R} + c - 1 = \dim{Q/(x_{1},\dots,x_{c})} + c - 1 &= \dim{Q}-c+c-1 \\
&= \dim{Q}-1 \\
&< \dim{Q}.
\end{align*}
Proposition \ref{xizerosummary} now gives $\xi^{Q}(M,N)=0$, and \cite[Theorem 3.4.3]{celikbasdao} gives the first equality below
\[2^{c}\cdot h_{c}^{R}(M,N) = h_{0}^{Q}(M,N) =  \xi^{Q}(M,N) = 0.\]
The second equality is by definition. Therefore the result follows from \cite[Theorem 4.2]{celikbasdao}.
\end{proof}

\begin{remark} A consequence of \cite[Corollary 3.5]{bergh} is that if $M$ and $N$ are finitely generated modules over a local complete intersection $R$ with $\length(N)<\infty$ and if 
\[\ext_{R}^{n}(M,N)=\dots=\ext_{R}^{n+\cx_{R}M-1}(M,N)=0\]
for some $n>\depth{R}-\depth{M}$, then $\ext_{R}^{i}(M,N)=0$ for all $i>\depth{R}-\depth{M}$ (also see \cite[Corollary 4.7]{celikbasdao}). Theorem \ref{vanishingofextlemmausingherbranddifference} does not assume either module has finite length, but instead assumes that a dimension inequality holds. 
\end{remark}


\small

\end{document}